\newtheorem*{maintheorem*}{Main Theorem}
\newtheorem{theorem}{Theorem}[section]
\newtheorem*{theorem*}{Main Theorem}
\newtheorem{prop}[theorem]{Proposition}
\newtheorem{lemma}[theorem]{Lemma}
\newtheorem{cor}[theorem]{Corollary}
\theoremstyle{definition}
\newtheorem{remark}[theorem]{Remark}
\newtheorem{example}[theorem]{Example}
\numberwithin{equation}{section}
\newcommand{\cc}{\mathbb{C}}
\newcommand{\ff}{\mathbb{F}}
\newcommand{\nn}{\mathbb{N}}
\newcommand{\pp}{\mathbb{P}}
\newcommand{\qq}{\mathbb{Q}}
\newcommand{\rr}{\mathbb{R}}
\newcommand{\zz}{\mathbb{Z}}
\newcommand{\uu}{\mathcal{U}}
\providecommand\ldb{\llbracket}
\providecommand\rdb{\rrbracket}
\newcommand{\gp}{\text{gp}}
\newcommand{\qf}{\text{qf}}
\newcommand{\ii}{\mathcal{A}}
\newcommand{\zzz}{\text{Int}(\zz)}
\keywords{integer-valued polynomials, atomic domain, ACCP, ascending chain condition on principal ideals, FFD, finite factorization domain, idf-domain, Furstenberg domain, atomicity, factorization theory}
\subjclass[2010]{Primary: 13A05, 13F15, 13F20; Secondary: 13G05}
\begin{document}
	
\mbox{}
\title{Divisibility in rings of integer-valued polynomials}

\author{Felix Gotti}
\address{Department of Mathematics\\MIT\\Cambridge, MA 02139}
\email{fgotti@mit.edu}

\author{Bangzheng Li}
\address{Christian Heritage School\\Trumbull, CT 06611}
\email{libz2003@outlook.com}

\date{\today}
	
\begin{abstract}
	 In this paper, we address various aspects of divisibility by irreducibles in rings consisting of integer-valued polynomials. An integral domain is called atomic if every nonzero nonunit factors into irreducibles. Atomic domains that do not satisfy the ascending chain condition on principal ideals (ACCP) have proved to be elusive, and not many of them have been found since the first one was constructed by A. Grams in 1974. Here we exhibit the first class of atomic rings of integer-valued polynomials without the ACCP. An integral domain is called a finite factorization domain (FFD) if it is simultaneously atomic and an idf-domain (i.e., every nonzero element is divisible by only finitely many irreducibles up to associates). We prove that a ring is an FFD if and only if its ring of integer-valued polynomials is an FFD. In addition, we show that neither being atomic nor being an idf-domain transfer, in general, from an integral domain to its ring of integer-valued polynomials. In the same class of rings of integer-valued polynomials, we consider further properties that are defined in terms of divisibility by irreducibles, including being Cohen-Kaplansky and being Furstenberg.
\end{abstract}

\bigskip
\maketitle

%%%%%%%%%%%
%%%%%%%%%%%
\section{Introduction}
\label{sec:intro}

\medskip
Let $R$ be an integral domain with quotient field $K$, and let $S$ be a subset of $R$. The ring of integer-valued polynomials of $R$ on $S$, denoted by $\text{Int}(S,R)$, consists of all polynomials in $K[x]$ taking~$S$ to~$R$. The first relevant studies of rings of integer-valued polynomials date back to 1919 and are due to A. Ostrowski~\cite{aO19} and G. P\'olya~\cite{gP19}. Since then rings of integer-valued polynomials have been systematically investigated in connection to several areas of mathematics.

\smallskip
When $S = R$ it is customary to write $\text{Int}(R)$ instead of the more cumbersome notation $\text{Int}(R,R)$; in this case, $\text{Int}(R)$ is simply called the ring of integer-valued polynomials of $R$. It is clear that $R[x] \subseteq \text{Int}(R) \subseteq \text{Int}(S,R)$, and it is worth noticing that $\text{Int}(R) = R[x]$ provided that $R$ is a local integral domain with infinite residue field \cite[Corollary~2]{CC71}. In general, the inclusion $R[x] \subseteq \text{Int}(R)$ is strict. For instance, when $R = \zz$, one sees that $\binom{x}{2}$ belongs to $\text{Int}(\zz)$ even though it does not belong to $\zz[x]$; moreover, for every $n \in \nn_0$,
\[
	\binom{x}{n} := \frac{x(x-1) \cdots (x - (n-1))}{n!} \in \zzz,
\]
where we assume the convention that $\binom{x}{0} = 1$. The ring $\text{Int}(\zz)$ exhibits a rather fascinating behavior. It is a free $\zz$-module with regular basis $\big\{ \binom{x}{n} \mid n \in \nn_0\}$. Indeed, if we set $\Delta f(k) = f(k+1) - f(k)$, then the Gregory-Newton formula allows us to write any polynomial $f(x)$ in $\text{Int}(\zz)$ as a unique $\zz$-linear combination of the $\binom{x}{n}$'s as follows:
\begin{equation} \label{eq:Gregory-Newton formula}
	f(x) = \sum_{j=0}^n \Delta^j f(0) \binom{x}{j},
\end{equation}
where $n$ is the degree of $f(x)$. This property can be generalized to intermediate rings of the extension $R[x] \subseteq \text{Int}(\zz, R)$ for any integral domain $R$ of characteristic zero (see \cite[Proposition~II.1.4]{CC97}). From the ring-theoretical viewpoint, it is worth mentioning that $\text{Int}(\zz)$ is a two-dimensional completely integrally closed Pr\"ufer domain (see \cite[Theorems~13 and~17]{CC16} and \cite[Example~2.7(b)]{AAZ90}) that is not a Bezout domain. In addition, $\text{Int}(\zz)$ is one of the most natural examples of non-Noetherian integral domains (see \cite[Proposition~3]{CC16}).

\smallskip
Several aspects of factorizations into irreducibles in rings of integer-valued polynomials have been studied by various authors in the past. For instance, the atomicity of $\text{Int}(S,R)$ was considered by D.~F. Anderson et al. in \cite{ACCS95}. In addition, the elasticity of $\text{Int}(S,R)$ was first investigated by P. J. Cahen and J. L. Chabert in~\cite{CC95}, and further studied by S. T. Chapman et al. in \cite{ACCS95,CM05,CS06}. On the other hand, the irreducibility in $\text{Int}(S,R)$ has been recently studied by S. Frisch and S. Nakato in~\cite{FN20,sN20}. Finally, the system of sets of lengths of rings of integer-valued polynomials was investigated by Frisch et al. in \cite{sF13} and later in~\cite{FNR19}. In this paper, we continue the study of the atomic structure of rings of integer-valued polynomials, emphasizing on properties that can be defined in terms of divisibility by irreducibles.

\smallskip
Following P. M. Cohn~\cite{pC68}, we say that the integral domain $R$ is atomic if every nonzero nonunit element of~$R$ factors into irreducibles. Also, if every ascending chain of principal ideals of $R$ has finite length, $R$ is said to satisfy the ACCP (ascending chain condition on principal ideals). It is easy to verify that every integral domain satisfying the ACCP is atomic. Although the converse of this statement does not hold in general, examples witnessing this failure are hard to come by: the first of such examples was constructed back in the seventies by A. Grams in~\cite{aG74}. In Section~\ref{sec:atomicity and ACCP}, we use Grams' example to construct a class of atomic rings of integer-valued polynomials that do not satisfy the ACCP.

\smallskip
Following A. Grams and H. Warner~\cite{GW75}, we say that an integral domain~$R$ is an irreducible-divisor-finite (or an idf-domain) provided that every nonzero element of $R$ has only finitely many non-associate irreducible divisors. In~\cite{AAZ90}, D. D. Anderson, D. F. Anderson, and M. Zafrullah reserved the term finite factorization domain (FFD) for an integral domain that is atomic and an idf-domain simultaneously: they proved indeed that an atomic domain is an FFD if and only if each of its elements has finitely many factorizations into irreducibles. In Section~\ref{sec:BFD and FFD}, we establish the following characterization: for any integral domain $R$ and any infinite subset $S$ of $R$, the ring $\text{Int}(S,R)$ is an FFD if and only if $R$ is an FFD. In particular, $\text{Int}(R)$ is an FFD if and only if $R$ is an FFD. Cohen-Kaplansky domains (CKD) are atomic domains containing only finitely many irreducibles up to associates. Clearly, every CKD is an FFD. We briefly show at the end of Section~\ref{sec:BFD and FFD} that no ring of integer-valued polynomials is a CKD.

\smallskip
In Section~\ref{sec:irreducible divisor domains}, we keep on investigating divisibility by irreducibles in rings of integer-valued polynomials, but we extend our study to rings that may not be atomic. Honoring H. Furstenberg and following P. Clark's terminology~\cite{pC17}, we say that an integral domain is a Furstenberg domain if every nonzero nonunit has an irreducible divisor. It is clear that every atomic domain is a Furstenberg domain. We will show that $\text{Int}(S,R)$ is a Furstenberg domain if and only if $R$ is a Furstenberg domain (regardless the cardinality of $S$), and this will allow us to provide examples of non-atomic rings of integer-valued polynomials that are Furstenberg domains. As we mentioned before, if $R$ is an FFD, so is $\text{Int}(R)$. We find interesting the fact that neither being atomic nor being an idf-domain transfer from~$R$ to $\text{Int}(R)$. In Example~\ref{ex:the idf property does not transfer from R to Int(R)}, we construct an idf-domain whose ring of integer-valued polynomials is not an idf-domain (the atomicity part of this fact is addressed in Remark~\ref{rem:the RIVP of an atomic domain may not be atomic}). Finally, we provide a sufficient condition for a ring of integer-valued polynomials to be a non-atomic idf-domain.

\bigskip
%%%%%%%%%%%
%%%%%%%%%%%
\section{Preliminary}
\label{sec:prelim}

In this section, we briefly review most of the notation and terminology we will be using later as well as some of the fundamental results we need from non-unique factorization theory and rings of integer-valued polynomials. See \cite{GH06} by A. Geroldinger and F. Halter-Koch for an extensive treatment of non-unique factorization theory and \cite{CC97} by P. J. Cahen and J. L. Chabert for a comprehensive background on integer-valued polynomials.

\smallskip
%%%%%%%%%%%%%%%
\subsection{General Notation}

As it is customary, $\zz$, $\qq$, $\rr$, and $\cc$ will denote the set integers, rational numbers, real numbers, and complex numbers, respectively. We let $\nn$ and $\nn_0$ denote the set of positive and nonnegative integers, respectively. In addition, the set of primes will be denoted by $\pp$. For $p \in \pp$ and $n \in \nn$, we let $\ff_{p^n}$ be the finite field of cardinality $p^n$. For $m,n \in \zz$ with $m \le n$, we let $\ldb m,n \rdb$ denote the set of integers between $m$ and $n$, that is, $\ldb m,n \rdb = \{j \in \zz \mid m \le j \le n\}$. In addition, for $S \subseteq \rr$ and $r \in \rr$, we set $S_{\ge r} = \{s \in S \mid s \ge r\}$ and $S_{> r} = \{s \in S \mid s > r\}$.

\smallskip
%%%%%%%%%%%%%%
\subsection{Factorizations}

Although a monoid is usually defined to be a semigroup with an identity element, here we will tacitly assume that all monoids are cancellative and commutative. Let $M$ be a monoid. We say that $M$ is \emph{torsion-free} provided that for all $a,b \in M$, if $a^n = b^n$ for some $n \in \nn$, then $a=b$. The \emph{quotient group} $\gp(M)$ of $M$ is the unique abelian group $\gp(M)$ up to isomorphism satisfying that any abelian group containing a homomorphic image of $M$ will also contain a homomorphic image of $\gp(M)$. The \emph{rank} of $M$ is the rank of $\gp(M)$ as a $\zz$-module. The group of invertible elements of $M$ is denoted by $\uu(M)$. We set $M_{\text{red}} = M/\uu(M)$, and we say that $M$ is \emph{reduced} if $|\uu(M)| = 1$, in which case, $M$ is naturally isomorphic to $M_\text{red}$. For $a,b \in M$, we say that $a$ \emph{divides} $b$ \emph{in} $M$ and write $a \mid_M b$ if $b \in aM$. The monoid $M$ is a \emph{valuation monoid} if for every $a,b \in M$ either $a \mid_M b$ or $b \mid_M a$. In addition, a submonoid $N$ of $M$ is \emph{divisor-closed} provided that, for any $a \in M$ and $b \in N$, the relation $a \mid_M b$ implies that $a \in N$.
\smallskip

An element $a \in M \! \setminus \! \uu(M)$ is an \emph{irreducible} (or an \emph{atom}) if whenever $a = uv$ for some $u,v \in M$, then either $u \in \uu(M)$ or $v \in \uu(M)$. The set of irreducibles of $M$ is denoted by $\ii(M)$. The monoid $M$ is \emph{atomic} if every non-invertible element factors into irreducibles. A subset $I$ of $M$ is an \emph{ideal} of~$M$ provided that $I \, M = I$ (or, equivalently, $I \, M \subseteq I$). The ideal $I$ is \emph{principal} if $I = bM$ for some $b \in M$. The monoid $M$ satisfies the \emph{ascending chain condition on principal ideals} (\emph{ACCP}) if every ascending chain of principal ideals of $M$ stabilizes. Although it is easy to check that every monoid satisfying the ACCP is atomic, the converse does not hold even for rank-one monoids (see Grams' monoid in~\eqref{eq:Grams monoid}). If $M$ satisfies the ACCP, then every submonoid $N$ of $M$ with $N^\times = M^\times \cap N$ satisfies the ACCP. The same does not hold for atomicity (see~\eqref{eq:Grams monoid}). Clearly, $M$ is atomic (resp., satisfies the ACCP) if and only if~$M_{\text{red}}$ is atomic (resp., satisfies the ACCP).
\smallskip

Let $\mathsf{Z}(M)$ denote the free (commutative) monoid on $\ii(M_{\text{red}})$, and let $\pi \colon \mathsf{Z}(M) \to M_\text{red}$ be the unique monoid homomorphism fixing $a$ for every $a \in \ii(M_{\text{red}})$. If $z = a_1 \cdots a_\ell \in \mathsf{Z}(M)$, where $a_1, \dots, a_\ell \in \mathcal{A}(M_{\text{red}})$, then $\ell$ is the \emph{length} of $z$ and is denoted by $|z|$. For each $b \in M$, we set
\[
	\mathsf{Z}(b) := \mathsf{Z}_M(b) := \pi^{-1} (b \uu(M)).
\]
If $|\mathsf{Z}(b)| = 1$ for every $b \in M$, then $M$ is called a \emph{unique factorization monoid} (\emph{UFM}). On the other hand, if $M$ is atomic and $|\mathsf{Z}(b)| < \infty$ for every $b \in M$, then $M$ is called a \emph{finite factorization monoid} (\emph{FFM}). Clearly, every UFM is an FFM.  The monoid $M$ is an FFM if and only if every element of $M$ is contained in only finitely many principal ideals \cite[Theorem~2]{fHK92}. If $M$ is an FFM, then it is not hard to argue that every submonoid $N$ of $M$ with $N^\times = M^\times \cap \gp(N)$ is also an FFM. Now, for each $b \in M$, we set
\[
	\mathsf{L}(b) := \mathsf{L}_M(b) := \{ |z| \mid z \in \mathsf{Z}(b) \}.
\]
If $M$ is atomic and $|\mathsf{L}(b)| < \infty$ for every $b \in M$, then $M$ is called a \emph{bounded factorization monoid} (\emph{BFM}). It is clear that if a monoid is an FFM, then it is a BFM. In addition, every BFM satisfies the ACCP \cite[Corollary~1.4.4]{GH06}. As for the ACCP, if $M$ is a BFM, then it is not hard to verify that every submonoid $N$ of $M$ with $N^\times = M^\times \cap N$ is also a BFM.
\smallskip

Let $R$ be an integral domain. Throughout this paper, we let $R^* := R \setminus \{0\}$ and $\qf(R)$ denote the multiplicative monoid and the quotient field of $R$, respectively. In addition, the \emph{group of divisibility} of~$R$, often written additively and denoted by $G(R)$, is the abelian group $\qf(R)^\times/R^\times$. The group $G(R)$ is partially ordered by the relation $xR^\times \le y R^\times$ if and only if $y/x \in R$. As for monoids, we let $\mathcal{A}(R)$ denote the set of irreducibles of $R$. Following Coykendall et al.~\cite{CDM99}, we say that an integral domain is \emph{antimatter} if it does not contain any irreducibles. On the other hand, an integral domain is \emph{atomic} provided that $R^*$ is an atomic monoid. It is not hard to verify that $R$ is atomic if and only if the nonnegative cone of $G(R)$ is atomic.

Each factorization property introduced in the previous paragraph can be naturally defined for an integral domain via its multiplicative monoid. We say that $R$ is a \emph{unique} (resp., \emph{finite}, \emph{bounded}) \emph{factorization domain} provided that $R^*$ is a unique (resp., finite, bounded) factorization monoid. Accordingly, we use the acronyms UFD, FFD, and BFD. Observe that this new definition of a UFD coincides with the standard definition of a UFD. We set $\mathsf{Z}(R) := \mathsf{Z}(R^\ast)$ and, for every $x \in R^\ast$, we set $\mathsf{Z}(x) := \mathsf{Z}_{R^\ast}(x)$ and $\mathsf{L}(x) := \mathsf{L}_{R^\ast}(x)$. It is easy to see that $R$ is a BFD if and only if $G(R)$ is a BFM, while $R$ is an FFD if and only if the interval $[R^\times, xR^\times]$ is finite for every positive element $xR^\times \in G(R)$ \cite[Theorem~1]{AM96}.

\smallskip
%%%%%%%%%%%%%%%%%%
\subsection{Polynomial-Like Rings}

Let $R$ be an integral domain with quotient field $K$, and let $S$ be a subset of $R$. The \emph{ring of integer-valued polynomials of}~$R$ \emph{on}~$S$, denoted by $\text{Int}(S,R)$, is the subring of $K[x]$ consisting of all polynomials $p(x)$ satisfying that $p(S) \subseteq R$, that is,
\[
	\text{Int}(S,R) := \{p(x) \in K[x] \mid p(S) \subseteq R\}.
\]
When $S = R$, it is customary to write $\text{Int}(R)$ instead of $\text{Int}(S,R)$ and simply call $\text{Int}(R)$ the \emph{ring of integer-valued polynomials of} $R$. It immediately follows from~\cite[Corollary~2]{CC71} that if $R$ is an integral domain containing an infinite field, then the equality $\text{Int}(R) = R[x]$ holds. We record this result here for future reference.

\begin{theorem} \label{thm:IVP are polynomial rings}
	If $R$ is an integral domain containing an infinite field, then $\emph{Int}(R) = R[x]$.
\end{theorem}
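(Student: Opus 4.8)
The plan is to establish the nontrivial inclusion $\text{Int}(R) \subseteq R[x]$ by Lagrange interpolation, exploiting the fact that the nonzero elements of any field sitting inside $R$ are units of $R$. The reverse inclusion $R[x] \subseteq \text{Int}(R)$ is immediate, since a polynomial with coefficients in $R$ certainly maps $R$ into $R$, so I would only need to argue one direction.

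Let $F$ be an infinite field contained in $R$, write $K = \qf(R)$, and fix $p(x) \in \text{Int}(R)$, say of degree $n$. First I would choose $n+1$ pairwise distinct elements $\alpha_0, \dots, \alpha_n \in F$, which is possible because $F$ is infinite. Since each $\alpha_i$ lies in $F \subseteq R$, the definition of $\text{Int}(R)$ gives $p(\alpha_i) \in R$ for every $i \in \ldb 0, n \rdb$. Then I would form the Lagrange interpolation polynomial
\[
	q(x) := \sum_{i=0}^n p(\alpha_i) \prod_{\substack{0 \le j \le n \\ j \neq i}} \frac{x - \alpha_j}{\alpha_i - \alpha_j},
\]
which is a polynomial of degree at most $n$ over $K$. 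The key point is that for $i \neq j$ the difference $\alpha_i - \alpha_j$ is a nonzero element of the field $F$, hence a unit of $R$; consequently each inner product $\prod_{j \neq i} (x - \alpha_j)/(\alpha_i - \alpha_j)$ already lies in $F[x] \subseteq R[x]$, and therefore $q(x) \in R[x]$.

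To finish, I would verify that $q(x) = p(x)$. Both are polynomials of degree at most $n$ over the field $K$, and by construction $q(\alpha_i) = p(\alpha_i)$ for all $i \in \ldb 0, n \rdb$; hence $q(x) - p(x)$ is a polynomial of degree at most $n$ with at least $n+1$ distinct roots in $K$, which forces $q(x) = p(x)$. This yields $p(x) = q(x) \in R[x]$, and combined with the trivial inclusion it gives $\text{Int}(R) = R[x]$.

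I do not expect a genuine obstacle here; the only step needing care is the one identifying the coefficients of the Lagrange basis polynomials as elements of $R$, which depends on $F$ being a \emph{field} (so that each $\alpha_i - \alpha_j$ is invertible in $R$) rather than merely an infinite subring. Alternatively, the statement follows at once from \cite[Corollary~2]{CC71}, as the excerpt indicates.
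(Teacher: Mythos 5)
Your proof is correct. Note, however, that the paper does not actually prove this statement: it simply records it as an immediate consequence of \cite[Corollaire~2]{CC71} (the result that $\mathrm{Int}(R)=R[x]$ for a local domain with infinite residue field). So your contribution is a genuinely self-contained, elementary argument rather than a different proof of the same kind. The mechanism you use --- that the pairwise differences $\alpha_i-\alpha_j$ of distinct elements of the infinite subfield $F$ are units of $R$, so the Lagrange basis polynomials lie in $F[x]\subseteq R[x]$ --- is exactly the hypothesis that makes the Cahen--Chabert result work, and it is the multiplicative counterpart of the Vandermonde/Cramer's rule argument the authors themselves deploy in Lemma~\ref{lem:pulling sequence}: there the Vandermonde determinant $d_n=\prod_{i<j}(s_j-s_i)$ only clears denominators, whereas in your setting it is a unit, so no clearing is needed. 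All the steps check out, including the identification $q=p$ (a degree-$\le n$ polynomial over the quotient field with $n+1$ roots vanishes) and the observation that the identity of the subfield must coincide with that of $R$ so that inverses taken in $F$ really are inverses in $R$. The trade-off is the usual one: the citation is shorter, while your argument makes the paper self-contained and makes transparent exactly where the hypothesis ``contains an infinite field'' (as opposed to merely ``is infinite'') is used.
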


Since $R^*$ is a divisor-closed submonoid of $\text{Int}(S,R)^*$, we see that $\text{Int}(S,R)^\times = R^\times$. In addition, $\text{Int}(S,R)$ satisfies the ACCP (resp., is a BFD) if and only if $R$ satisfies the ACCP (resp., is a BFD) and $|S| = \infty$ (see Theorem~\ref{thm:ACCP characterization} and Proposition~\ref{prop:BF characterization}), and the same statement holds for the finite factorization property, as we will find in Theorem~\ref{thm:FF characterization}. A similar statement does not hold, however, for the property of being atomic, and we will say more about this in the next section.
\smallskip

To construct various examples of rings of integer-valued polynomials here, we use monoid rings with rational exponents. For a monoid $M$, we let $R[y;M]$ denote the ring of polynomial expressions with coefficients in $R$ and exponents in $M$. If the monoid $M$ is totally ordered (i.e., it has a total order relation `$\le$' compatible with its operation), then a polynomial expression $\sum_{i=1}^n c_i y^{m_i} \in R[y;M]$ is said to be written \emph{canonically} if $c_1, \dots, c_n \in R^*$ and $m_1 > \dots > m_n$. It follows from \cite[Theorem~8.1]{rG84} that when~$M$ is torsion-free, $R[y;M]$ is an integral domain, in which case, \cite[Theorem~11.1]{rG84} guarantees that $R[M]^\times = \{uX^m \mid u \in R^\times \ \text{and} \ m \in \uu(M)\}$. In~\cite{rG84}, R. Gilmer gives a generous overview of the advances in monoid rings until 1984. Factorization-theoretical aspects of monoid rings with rational exponents have been recently considered in~\cite{fG21}.

\bigskip
%%%%%%%%%%%%%%%%%%
%%%%%%%%%%%%%%%%%%
\section{Atomicity and the ACCP}
\label{sec:atomicity and ACCP}

Rings of integer-valued polynomials are not in general atomic. Perhaps the simplest example of a non-atomic ring of integer-valued polynomials is $\text{Int}(\{0\},\zz) = \zz + x\qq[x]$: indeed, one can readily check that~$x$ does not factor into irreducibles in $\text{Int}(\{0\}, \zz)$. This result is generalized in~\cite{ACCS95} as follows.

\begin{prop} \cite[Proposition~1.1]{ACCS95} \label{prop:sufficient condition for atomicity}
	Let $R$ be an integral domain that is not a field, and let $S$ be a nonempty subset of $R$. If $\emph{Int}(S,R)$ is atomic, then $|S| = \infty$.
\end{prop}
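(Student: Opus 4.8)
The plan is to prove the contrapositive: if $S$ is a nonempty finite subset of $R$ and $R$ is not a field, then $\text{Int}(S,R)$ fails to be atomic. Write $K = \qf(R)$, put $n = |S|$, and let $f(x) = \prod_{s \in S}(x-s) \in R[x]$. The key feature of $f$ is that it vanishes identically on $S$, so $\frac{1}{b}f \in \text{Int}(S,R)$ for every $b \in R^*$; I will choose a nonzero nonunit $b \in R$ in such a way that $g := \frac1b f$ admits no factorization into irreducibles of $\text{Int}(S,R)$.

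To locate $b$, first consider the finite set $D = \big\{\, \prod_{s\in S}(\phi(s)-s) \;\big|\; \phi \,\big\}$, where $\phi$ ranges over the self-maps of $S$ having no fixed point; since $R$ is a domain, every element of $D$ is a nonzero element of $R$. I claim some nonzero nonunit $b$ of $R$ divides no element of $D$: setting $d^\ast = \prod_{d\in D} d$ (a nonzero element of $R$), if instead every nonzero nonunit of $R$ divided $d^\ast$, then, fixing any nonzero nonunit $b_0$, the element $b_0 d^\ast$ --- again a nonzero nonunit --- would divide $d^\ast$, forcing $d^\ast = b_0 d^\ast t$ and hence $b_0 t = 1$ for some $t \in R$, which contradicts that $b_0$ is not a unit. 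So there is a nonzero nonunit $b$ with $b \nmid_R d^\ast$, and therefore $b \nmid_R d$ for every $d \in D$. (When $n = 1$ there are no fixed-point-free self-maps of $S$, so $D = \varnothing$ and the condition on $b$ is vacuous; one simply takes any nonzero nonunit $b$, which exists because $R$ is not a field.)

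With $b$ fixed, set $g = \frac1b f$. Then $g \in \text{Int}(S,R)$ since $g(S) = \{0\}$, and $g$ is a nonzero nonunit because $\deg g = n \ge 1$ while $\text{Int}(S,R)^\times = R^\times$. The crux --- and the step I expect to require the most care --- is to show $g$ has no factorization into irreducibles. First I would record the auxiliary fact that for every $c \in K^\times$ the element $cf$ is reducible in $\text{Int}(S,R)$, because $cf = b \cdot \big(\tfrac{c}{b}f\big)$ where $\tfrac{c}{b}f \in \text{Int}(S,R)$ is a nonunit of positive degree and $b$ is a nonunit. Now suppose, toward a contradiction, that $g = a_1 \cdots a_m$ with each $a_i$ irreducible in $\text{Int}(S,R)$. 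Since $g = \tfrac1b\prod_{s\in S}(x-s)$ in $K[x]$ and the $x-s$ are pairwise non-associate irreducibles of $K[x]$, unique factorization in $K[x]$ shows that each $a_i = c_i \prod_{s \in T_i}(x-s)$ for suitable $c_i \in K^\times$ and subsets $T_i \subseteq S$, where the $T_i$ partition $S$ (some may be empty) and $\prod_i c_i = 1/b$. By the auxiliary fact no $T_i$ equals $S$; hence at least two of the $T_i$ are nonempty, say $T_1, \dots, T_j$ with $j \ge 2$, and $S \setminus T_i \ne \varnothing$ for each $i \le j$. For each such $i$ choose $s_i' \in S \setminus T_i$; evaluating $a_i$ at $s_i'$ gives $r_i := c_i \prod_{s\in T_i}(s_i'-s) \in R$, so $c_i = r_i / \prod_{s \in T_i}(s_i' - s)$. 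The remaining $a_i$ are nonzero constants lying in $R$; let $\rho$ be their product. Comparing leading coefficients and clearing denominators then yields
\[
	\prod_{i\le j}\ \prod_{s\in T_i}(s_i'-s) \;=\; b\,\rho\prod_{i\le j}r_i \;\in\; bR .
\]
Because $\{T_i\}_{i\le j}$ partitions $S$, the left-hand side equals $\prod_{s\in S}\big(\phi(s)-s\big)$, where $\phi \colon S \to S$ sends each $s$ to the element $s_i'$ attached to the block containing $s$; this $\phi$ has no fixed point, so $\prod_{s\in S}(\phi(s)-s) \in D$, and we conclude that $b$ divides an element of $D$ --- contradicting the choice of $b$. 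Hence $g$ is a nonzero nonunit of $\text{Int}(S,R)$ with no factorization into irreducibles, so $\text{Int}(S,R)$ is not atomic.

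I expect the main obstacle to be the bookkeeping in the last step: tracking exactly how the linear factors $x - s$ of $f$ are distributed among the $a_i$, and verifying that the denominators of the $c_i$ multiply to precisely $\prod_{s\in S}(\phi(s)-s)$ for a genuine fixed-point-free self-map $\phi$. By contrast, the selection of $b$ is entirely elementary and, notably, requires no atomicity hypothesis on $R$ itself.
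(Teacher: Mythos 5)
Your argument is correct. Note that the paper does not actually prove this proposition --- it is quoted from \cite[Proposition~1.1]{ACCS95} --- so your proof stands on its own; it is the natural generalization of the paper's motivating example $\mathrm{Int}(\{0\},\zz)=\zz+x\qq[x]$, where $x$ fails to factor into irreducibles. The one genuinely delicate point is the case $|S|\ge 2$: there, a proper sub-product $c\prod_{s\in T}(x-s)$ with $\varnothing\ne T\subsetneq S$ \emph{can} be irreducible in $\mathrm{Int}(S,R)$ (e.g.\ $x$ and $x-1$ are irreducible in $\mathrm{Int}(\{0,1\},\zz)$), so one cannot simply argue that every divisor of $f$ of positive degree is reducible. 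Your bookkeeping handles this correctly: the evaluations $a_i(s_i')\in R$ at points $s_i'\in S\setminus T_i$, combined with the leading-coefficient identity $\prod_i c_i=1/b$, force $b$ to divide $\prod_{s\in S}(\phi(s)-s)$ for a fixed-point-free self-map $\phi$ of $S$, and your choice of $b$ (which exists by the clean observation that $b_0 d^\ast\mid d^\ast$ would make $b_0$ a unit) rules this out. The degenerate situations ($|S|=1$, where the contradiction already appears at the step ``at least two $T_i$ are nonempty,'' and $D$ containing units) are all consistent with the argument as written. No gaps.
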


When $|S| < \infty$ we can obtain, as a consequence of Proposition~\ref{prop:sufficient condition for atomicity}, the following characterizations of the UFDs $\text{Int}(S,R)$ in terms of the weaker factorization properties we consider in this paper.

\begin{cor} \label{cor:factorization properties of Int(R,S) when S is finite}
	Let $R$ be an integral domain, and let $S$ be a finite subset of $R$. Then the following conditions are equivalent.
	\begin{enumerate}
		\item[(a)] $\emph{Int}(S,R)$ is a UFD.
		\smallskip
		
		\item[(b)] $\emph{Int}(S,R)$ is an FFD.
		\smallskip
		
		\item[(c)] $\emph{Int}(S,R)$ is a BFD.
		\smallskip
		
		\item[(d)] $\emph{Int}(S,R)$ is atomic.
		\smallskip
		
		\item[(e)] $R$ is a field.
	\end{enumerate}
\end{cor}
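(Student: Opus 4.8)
The plan is to close the cycle of implications $(a) \Rightarrow (b) \Rightarrow (c) \Rightarrow (d) \Rightarrow (e) \Rightarrow (a)$. The first three implications are not special to rings of integer-valued polynomials at all: they are instances of the general hierarchy $\text{UFD} \Rightarrow \text{FFD} \Rightarrow \text{BFD} \Rightarrow \text{ACCP} \Rightarrow \text{atomic}$ recorded for monoids (hence for integral domains via their multiplicative monoids) in Section~\ref{sec:prelim}. So all the content of the corollary is concentrated in the two implications $(d) \Rightarrow (e)$ and $(e) \Rightarrow (a)$.

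For $(d) \Rightarrow (e)$, I would argue by contraposition from Proposition~\ref{prop:sufficient condition for atomicity}. Suppose $\text{Int}(S,R)$ is atomic and, toward a contradiction, that $R$ is not a field. Since $S$ is a nonempty subset of $R$, Proposition~\ref{prop:sufficient condition for atomicity} applies and forces $|S| = \infty$, contradicting the hypothesis that $S$ is finite. Hence $R$ must be a field. (One caveat worth flagging explicitly: if $S = \emptyset$ is allowed, then $\text{Int}(\emptyset, R) = K[x]$ is a UFD for every integral domain $R$, so conditions $(a)$–$(d)$ would hold while $(e)$ could fail; the statement should therefore be read with $S$ nonempty, matching the standing hypothesis of Proposition~\ref{prop:sufficient condition for atomicity}.)

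For $(e) \Rightarrow (a)$, the key is the elementary observation that $\text{Int}(S,R) = R[x]$ whenever $R$ is a field. Indeed, if $R$ equals its own quotient field $K$, then for every $p(x) \in K[x]$ and every $s \in S \subseteq K$ we have $p(s) \in K = R$, so the defining condition $p(S) \subseteq R$ is automatic and $\text{Int}(S,R) = K[x]$. Since $K[x]$ is a principal ideal domain, it is a UFD, which yields $(a)$ and completes the cycle.

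I do not expect a genuine obstacle here: the corollary is essentially a repackaging of Proposition~\ref{prop:sufficient condition for atomicity} together with the standard chain of factorization implications and the identity $\text{Int}(S,R) = K[x]$ in the field case. The only point requiring a word of care is the degenerate empty-set case noted above, and otherwise the argument is routine.
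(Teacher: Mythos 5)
Your proof is correct and follows essentially the same route as the paper: the chain $(a)\Rightarrow(b)\Rightarrow(c)\Rightarrow(d)$ from the general factorization hierarchy, $(d)\Rightarrow(e)$ via Proposition~\ref{prop:sufficient condition for atomicity}, and $(e)\Rightarrow(a)$ from $\text{Int}(S,R)=K[x]$ when $R$ is a field. Your caveat about $S=\emptyset$ is a fair observation (the proposition indeed assumes $S$ nonempty, and $\text{Int}(\emptyset,R)=K[x]$ is always a UFD), but otherwise the argument matches the paper's.
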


\begin{proof}
	(a) $\Rightarrow$ (b) $\Rightarrow$ (c) $\Rightarrow$ (d): These implications are obvious.
	\smallskip
	
	(d) $\Rightarrow$ (e): This follows immediately from Proposition~\ref{prop:sufficient condition for atomicity}.
	\smallskip
	
	(e) $\Rightarrow$ (a): It is clear that if $R$ is a field, then $\text{Int}(S,R) = R[x]$, and so it is a UFD. 
\end{proof}

In light of Corollary~\ref{cor:factorization properties of Int(R,S) when S is finite}, in order to study the arithmetic of factorizations of rings of integer-valued polynomials $\text{Int}(S,R)$, it suffices to focus on the cases where $|S| = \infty$. We will do this throughout the current section and the next one.
\smallskip

%%%%%%%%%%%%%%%%%%%%%%%%%%%%%%%%%
\subsection*{The Ascending Chain Condition on Principal Ideals}

It also follows from Proposition~\ref{prop:sufficient condition for atomicity} that if a ring of integer-valued polynomials $\text{Int}(S,R)$ satisfies the ACCP, then $|S| = \infty$. Rings of integer-valued polynomials satisfying the ACCP have been characterized in \cite{CC95} and \cite{ACCS95} in the following way.

\begin{theorem} \label{thm:ACCP characterization}
	Let $R$ be an integral domain, and let $S$ be an infinite subset of $R$. Then the following statements hold.
	\begin{enumerate}
		\item  \cite[Theorem~1.3]{CC95} $\emph{Int}(R)$ satisfies the ACCP if and only if $R$ satisfies the ACCP.
		\smallskip
		
		\item \cite[Theorem~1.2]{ACCS95} $\emph{Int}(S,R)$ satisfies the ACCP if and only if $R$ satisfies the ACCP.
	\end{enumerate}
\end{theorem}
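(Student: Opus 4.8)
The plan is to prove both statements at once by establishing, for an arbitrary infinite subset $S$ of $R$, that $\text{Int}(S,R)$ satisfies the ACCP if and only if $R$ does; part~(1) is then the case $S = R$. One implication is routine: since $R^*$ is a divisor-closed submonoid of $\text{Int}(S,R)^*$ and $\text{Int}(S,R)^\times = R^\times$, a strictly ascending chain of principal ideals of $R$ extends to a strictly ascending chain of principal ideals of $\text{Int}(S,R)$ (if $(a) \subsetneq (b)$ in $R$ then $a/b \in R \setminus R^\times \subseteq \text{Int}(S,R) \setminus \text{Int}(S,R)^\times$, so $a\,\text{Int}(S,R) \subsetneq b\,\text{Int}(S,R)$), and therefore the ACCP descends from $\text{Int}(S,R)$ to $R$.

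For the converse, assume $R$ satisfies the ACCP and let $(f_0) \subseteq (f_1) \subseteq \cdots$ be an ascending chain of principal ideals of $\text{Int}(S,R)$. If every $f_i$ is zero the chain is stationary, so we may assume $f_0 \neq 0$, whence $f_i \neq 0$ for all $i$; write $f_i = g_i f_{i+1}$ with $g_i \in \text{Int}(S,R)^*$. Inside $K[x]$ we have $\deg f_i = \deg g_i + \deg f_{i+1} \geq \deg f_{i+1}$, so the non-increasing sequence $(\deg f_i)_i$ is eventually constant; fix $N$ with $\deg f_i$ constant for $i \geq N$. Then for $i \geq N$ each $g_i$ is a nonzero constant of $K$, and since a constant polynomial lies in $\text{Int}(S,R)$ precisely when it lies in $R$, we get $g_i =: c_i \in R^*$. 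Setting $r_i := c_N c_{N+1} \cdots c_{i-1}$ (with $r_N := 1$), we have $f_N = r_i f_i$ in $K[x]$ for every $i \geq N$.

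The crux is to push the chain down to $R$ by evaluation at a single well-chosen point. Since $f_N$ is a nonzero polynomial and $S$ is infinite, there is $s_0 \in S$ that is not a root of $f_N$, so $f_N(s_0) \in R^*$; this is exactly where $|S| = \infty$ enters (cf. Proposition~\ref{prop:sufficient condition for atomicity}). For $i \geq N$, evaluating $f_N = r_i f_i$ at $s_0$ gives $f_N(s_0) = r_i\, f_i(s_0)$, and $f_i(s_0) \in R$ because $f_i \in \text{Int}(S,R)$ and $s_0 \in S$, so $f_i(s_0) = f_N(s_0)/r_i \in R^*$. From $f_i(s_0) = c_i f_{i+1}(s_0)$ with $c_i \in R^*$ we obtain an ascending chain $(f_N(s_0)) \subseteq (f_{N+1}(s_0)) \subseteq \cdots$ of principal ideals of $R$, and cancellation in the domain $R$ shows $(f_i(s_0)) \subsetneq (f_{i+1}(s_0))$ if and only if $c_i \notin R^\times$. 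Likewise, cancellation in $\text{Int}(S,R)$ shows $(f_i) \subsetneq (f_{i+1})$ if and only if $g_i = c_i \notin \text{Int}(S,R)^\times = R^\times$. Hence the two chains have strict inclusions at exactly the same indices $i \geq N$; since the chain in $R$ stabilizes, there are only finitely many such indices, and the original chain stabilizes too. The main obstacle is conceptual rather than technical: one must notice that reducing to $g_i$ constant (via $\text{Int}(S,R) \cap K = R$) turns the problem into a statement about scalars, and that evaluation at a single non-root $s_0 \in S$ records faithfully which inclusions of the polynomial chain are strict — after which everything is cancellation in a domain.
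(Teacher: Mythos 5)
Your argument is correct. Note that the paper does not prove this theorem at all: it is quoted from \cite[Theorem~1.3]{CC95} and \cite[Theorem~1.2]{ACCS95}, so there is no in-paper proof to compare against. Your proof is essentially the standard one from those sources: one direction follows because $R^*$ is a divisor-closed submonoid of $\text{Int}(S,R)^*$ with $\text{Int}(S,R)^\times = R^\times$; for the converse, degree stabilization reduces the quotients $g_i$ to constants, which must lie in $R$ since $\text{Int}(S,R)\cap K = R$, and evaluating at an element $s_0 \in S$ that is not a root of $f_N$ (this is where $|S|=\infty$ is used) transports the chain faithfully into $R$, where the ACCP forces all but finitely many of the $c_i$ to be units. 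All the small points that need checking (nonvanishing of $f_i(s_0)$, cancellation in the two domains detecting strictness at the same indices) are handled correctly.
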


Unfortunately, none of the statements in Theorem~\ref{thm:ACCP characterization} hold if we replace the ACCP by atomicity, as we proceed to argue. 

\begin{remark} \label{rem:the RIVP of an atomic domain may not be atomic}
	By Theorem~\ref{thm:IVP are polynomial rings}, the equality $\text{Int}(R) = R[x]$ holds when $R$ contains an infinite field. On the other hand, it follows from \cite[Example~5.1]{mR93} that every field can be embedded into an atomic domain $R$ satisfying that $R[x]$ is not atomic. As a result, atomicity does not always transfer from an integral domain $R$ to $\text{Int}(R)$\footnote{The parallel question of whether atomicity transfers from a monoid $M$ to a monoid ring $F[t;M]$ over a given field~$F$ was recently answered negatively in~\cite{CG19}.}.
\end{remark}

\smallskip
As we have emphasized before, although not every atomic domain satisfies the ACCP, the search for atomic domains without the ACCP has proved to be a notoriously difficult task. The first of such domains was constructed in the seventies by Grams in~\cite{aG74} and not many more constructions of this kind seem to have appeared in the literature since then with the exceptions of~\cite{mR93,aZ82} and, more recently, \cite{BC19}. Here we consider polynomial rings with coefficients in the non-ACCP atomic domain constructed by Grams to obtain a class of atomic rings of integer-valued polynomials that do not satisfy the ACCP. The key ingredient in Grams' construction is an additive submonoid of $\qq_{\ge 0}$, which we introduce in the next example. The atomicity of additive submonoids of $\qq_{\ge 0}$ has been systematically investigated during the last few years (see the recent survey~\cite{CGG21} and references therein). As we will confirm here, these monoids are effective to find counterexamples in commutative ring theory (see also~\cite{CG19}). 

\smallskip

\begin{example} \label{ex:Grams monoid}
	Let $(p_n)_{n \in \nn_0}$ be the strictly increasing sequence whose terms are the odd primes, and consider the following additive submonoid of $\qq$:
	\begin{equation} \label{eq:Grams monoid}
		M := \Big\langle \frac{1}{2^n p_n} \ \big{|} \ n \in \nn_0 \Big\rangle.
	\end{equation}
	It is not hard to argue that $M$ is an atomic monoid with $\mathcal{A}(M) = \big\{ \frac{1}{2^n p_n} \mid n \in \nn_0 \big\}$. In addition,~$M$ does not satisfy the ACCP because the ascending chain of principal ideals $(\frac{1}{2^n} + M)_{n \in \nn}$ does not stabilize.
\end{example}

Now let $F$ be a field, and let $R$ be the integral domain we obtain after localizing the monoid ring $F[t;M]$ at the multiplicative set
\begin{equation} \label{eq:Grams multiplicative set}
	S := \{f(t) \in F[t;M] \mid f(0) \neq 0\},
\end{equation}
where $M$ is the monoid in Example~\ref{ex:Grams monoid}. It follows from \cite[Theorem~1.3]{aG74} that $R$ is atomic, and because $M$ does not satisfy the ACCP, $R$ cannot satisfy the ACCP. The integral domain $R$ is the non-ACCP atomic domain constructed by Grams in \cite{aG74} to disprove Cohn's assertion~\cite[Proposition]{pC68} that atomicity and the ACCP are equivalent conditions in the setting of integral domains. Honoring Grams, we call~$R$ the \emph{Grams' ring over}~$F$. We are now in a position to provide a class of atomic rings of integer-valued polynomials that do not satisfy the ACCP.

\begin{prop}
	Let $F$ be a field, and let $R$ be the Grams' ring over $F$. If $|F| = \infty$ (in particular, if~$F$ has characteristic zero), then $\emph{Int}(R) = R[x]$ is an atomic domain that does not satisfy the ACCP.
\end{prop}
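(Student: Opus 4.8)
The plan is to reduce the claim to two essentially independent facts about the Grams' ring $R$: first, that $\text{Int}(R) = R[x]$ (so that the ring of integer-valued polynomials has a familiar polynomial-ring structure), and second, that $R[x]$ is atomic but fails the ACCP. The first fact is immediate from Theorem~\ref{thm:IVP are polynomial rings} once we know $R$ contains an infinite field: since $R$ is a localization of the monoid ring $F[t;M]$ and $F$ embeds into $R$ (the field $F$ maps into $F[t;M]$ and then into the localization, with the composite injective because $f(0) = f$ for $f \in F^*$ lies in the denominator set $S$ only trivially — more simply, $F \hookrightarrow F[t;M] \hookrightarrow R$ is a composition of injections), the hypothesis $|F| = \infty$ gives that $R$ contains an infinite field, hence $\text{Int}(R) = R[x]$.

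Next I would handle the failure of the ACCP for $R[x]$. The ACCP does not transfer to polynomial extensions in general, but it is easy to see that if $R$ fails the ACCP then so does $R[x]$: a non-stabilizing ascending chain $(a_n R)_{n \ge 1}$ of principal ideals in $R$, say with $a_{n+1} \mid_R a_n$ but $a_n \nmid_R a_{n+1}$, yields the chain $(a_n R[x])_{n \ge 1}$ in $R[x]$, and this does not stabilize because $a_n R[x] = a_{n+1} R[x]$ would force $a_n = u a_{n+1}$ for some $u \in R[x]^\times = R^\times$, contradicting $a_n \nmid_R a_{n+1}$. Since $R$ is the Grams' ring, it does not satisfy the ACCP (as noted in the excerpt, inherited from the monoid $M$ of Example~\ref{ex:Grams monoid} failing the ACCP), so $R[x]$ does not satisfy the ACCP either. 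Concretely, one can take the chain arising from $(\tfrac{1}{2^n} + M)_{n \in \nn}$ inside $M$: the elements $t^{1/2^n} \in F[t;M]$, viewed in $R$, give $t^{1/2^n} = t^{1/2^{n+1}} \cdot t^{1/2^{n+1}}$, so $(t^{1/2^n} R)_n$ is ascending, and it does not stabilize because $t^{1/2^{n+1}}$ is not a unit of $R$ (its exponent $1/2^{n+1}$ is not in $\uu(M) = \{0\}$, and $t^{1/2^{n+1}} \notin S$ since it vanishes at $0$, so it is not inverted in the localization).

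The genuinely substantive point — and the one I expect to be the main obstacle — is the atomicity of $R[x]$. Here the crucial input is Grams' theorem that $R$ itself is atomic (cited in the excerpt as \cite[Theorem~1.3]{aG74}), but atomicity, unlike the ACCP, is not automatically preserved under adjunction of a polynomial variable; indeed Remark~\ref{rem:the RIVP of an atomic domain may not be atomic} explicitly warns that $R[x]$ can fail to be atomic even when $R$ is atomic. So one cannot merely quote a transfer result. The plan is to appeal to the specific structure of the Grams' ring: $R$ is a localization of a monoid ring $F[t;M]$ with $M \subseteq \qq_{\ge 0}$, and for such rings factorization behaviour is controlled by the (well-understood) atomic structure of $M$ together with the field $F$. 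One expects that $R[x] = (F[t;M])_S[x]$ can be analyzed via the monoid ring $F[t;M][x] = F[t \oplus x; M \oplus \nn_0]$ (or its appropriate localization), and that the atomicity of the exponent monoid $M \oplus \nn_0$ — which holds since $M$ is atomic and $\nn_0$ is atomic, and a direct sum of atomic monoids with trivial units over the relevant base is atomic — lifts to atomicity of the monoid ring over a field, and then survives the localization at $S$ because $S$ consists of polynomials with nonzero constant term. The key step to get right is the passage from atomicity of $M$ to atomicity of $R[x]$: I would either verify directly that every nonzero nonunit of $R[x]$ factors into irreducibles by a degree-and-leading-term induction mirroring Grams' original argument for $R$, or isolate a lemma stating that localizing $F[t; M \oplus \nn_0]$ at the set of elements not vanishing at the $t$-origin preserves atomicity when $M$ is an atomic submonoid of $\qq_{\ge 0}$. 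Once atomicity of $R[x]$ is secured, combining it with $\text{Int}(R) = R[x]$ and the failure of the ACCP established above completes the proof.
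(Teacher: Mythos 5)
Your reduction is the right one, and the first two thirds are fine: $\text{Int}(R)=R[x]$ follows from Theorem~\ref{thm:IVP are polynomial rings} since $F\hookrightarrow R$, and the failure of the ACCP passes from $R$ to $R[x]$ exactly as you say (the paper phrases this as ``$R^*$ is a divisor-closed submonoid of $\text{Int}(R)^*$,'' but your unit argument with $R[x]^\times=R^\times$ and the concrete chain $(t^{1/2^n}R)_n$ is equivalent and correct). The problem is the atomicity of $R[x]$, which you rightly flag as the substantive point but then do not actually prove. Your primary route --- pass to $F[t;M][x]=F[M\oplus\nn_0]$, observe that $M\oplus\nn_0$ is atomic, conclude that the monoid algebra is atomic, and then localize --- rests on the principle that atomicity of the exponent monoid lifts to atomicity of the monoid ring over a field. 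That principle is false: this is precisely the question answered negatively in~\cite{CG19}, as the paper's own footnote to Remark~\ref{rem:the RIVP of an atomic domain may not be atomic} points out. (Indeed, even for Grams' ring $R$ itself, atomicity is not obtained this way; Grams' proof uses the specific arithmetic of $M$.) Your fallback (``mirror Grams' original argument by a degree-and-leading-term induction'') is the correct instinct but is left entirely unexecuted, and it is exactly where the work lies.

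What the actual argument needs, and what the paper supplies, is a bounded-length mechanism. Using \cite[Lemma~1.1]{aG74}, every $b\in M$ decomposes uniquely as $\nu(b)+\sum_i c_i\frac{1}{2^ip_i}$ with $\nu(b)$ in the valuation monoid $N=\langle 1/2^n\mid n\in\nn\rangle$; extending $\nu$ to a map $\bar\nu$ on $F[t;M]^*$ by taking the minimum over the exponents of a polynomial expression, one writes $p(x)=t^qp'(x)$ with $q=\min_i\bar\nu(f_i)$, factors $t^q$ into irreducibles of $R$ using the atomicity of $M$, and then observes that some coefficient $f_s(t)/t^q$ of $p'(x)$ has, by \cite[Theorem~1.3]{aG74}, a \emph{bounded} set of lengths in $R$. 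This gives the a priori bound $k+\ell\le\max\mathsf{L}_R(f_s(t)/t^q)+\deg p'(x)$ on the number of factors in any decomposition $p'(x)=a_1\cdots a_kb_1(x)\cdots b_\ell(x)$ into nonunits, so a decomposition with $k+\ell$ maximal exists and is necessarily a factorization into irreducibles. Without this boundedness input (or some substitute for it), the induction you propose has no terminating invariant, since neither the degree nor the ``size'' of the constant factors is controlled by atomicity of $M$ alone. So the proposal as written has a genuine gap at its central step.
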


\begin{proof}
	Let $M$ and $S$ be as in \eqref{eq:Grams monoid} and~\eqref{eq:Grams multiplicative set}, respectively, and let $N$ be the submonoid $\langle \frac1{2^n} \mid n \in \nn \rangle$ of $M$. Observe that $N$ is a valuation monoid and, therefore, for any $q_1, q_2 \in N$ the conditions $q_1 \le q_2$ and $q_1 \mid_N q_2$ are equivalent. It follows from \cite[Lemma~1.1]{aG74} that every element $b \in M$ can be uniquely written as
	\[
		b = \nu(b) + \sum_{i=0}^k c_i \frac{1}{2^i p_i},
	\]
	where $\nu(b) \in N$ and $c_i \in \ldb 0, p_i - 1 \rdb$ for every $i \in \ldb 0,k \rdb$. %and $c_k \in \ldb 1, p_k - 1 \rdb$. 
	Now we define the map $\bar{\nu} \colon F[t;M]^* \to N$ by $\bar{\nu} \colon \sum_{i=1}^n c_i t^{b_i} \mapsto \min \{\nu(b_i) \mid i \in \ldb 1,n \rdb\}$ for any canonically-written nonzero polynomial expression $ \sum_{i=1}^n c_i t^{b_i}$.
	
	As $|F| = \infty$, Theorem~\ref{thm:IVP are polynomial rings} guarantees that $\text{Int}(R) = R[x]$. Since $R^*$ is a divisor-closed submonoid of $\text{Int}(R)^*$ that does not satisfy the ACCP, $\text{Int}(R)$ cannot satisfy the ACCP. Therefore we are done once we argue that $R[x]$ is atomic. To do this, take a nonzero nonunit $p(x) := \sum_{i=0}^n f_i(t) x^i \in R[x]$. After replacing $p(x)$ be one of its associates, we can assume that $f_i(t) \in F[t;M]$ for every $i \in \ldb 0,n \rdb$. For each $i \in \ldb 0,n \rdb$, the fact that $N$ is a valuation monoid ensures that $f_i(t)/t^{\bar{\nu}(f_i)} \in R$, and it is proved in~\cite[Theorem~1.3]{aG74} that $\mathsf{L}_R(f_i(t)/t^{\bar{\nu}(f_i)})$ is bounded. Now set
	\[
		q := \min \{\bar{\nu}(f_i) \mid i \in \ldb 0, n \rdb\} \in N
	\]
	and take $s \in \ldb 0,n \rdb$ such that $\bar{\nu}(f_s) = q$. Once again the fact that $N$ is a valuation monoid allows us to write $p(x) = t^q p'(x)$ for some $p'(x) \in R[x]$. Since the monomials in $F[t;M]$ that are irreducibles remain irreducibles in $R$, the fact that $M$ is atomic ensures that  $t^q$ factors into irreducibles in $R$, and so in $R[x]$. To argue that $p'(x)$ also factors into irreducibles in $R[x]$, write $p'(x) = a_1 \cdots a_k b_1(x) \cdots b_\ell(x)$ for some nonunits $a_1, \dots, a_k \in R$ and some polynomials $b_1(x), \dots, b_\ell(x) \in R[x]$ with $\deg b_i(x) \ge 1$ for every $i \in \ldb 1, \ell \rdb$. Because the coefficient $f_s(t)/{t^q}$ of $x^s$ has a bounded set of lengths in $R$, and the inequality $k + \ell \le \max \mathsf{L}_R(f_s(t)/{t^q}) + \deg p'(x)$ holds, we can assume that $k+\ell$ was taken as large as it could possibly be. This guarantees that $a_1 \cdots a_k b_1(x) \cdots b_\ell(x)$ is a factorization of $p'(x)$ in $R[x]$. Hence $R[x]$ is atomic.
\end{proof}

\smallskip
We conclude this section with a few words about hereditary atomicity. Following Coykendall et al.~\cite{CGH21}, we say that an integral domain $R$ is \emph{hereditarily atomic} provided that every subring of $R$ is atomic. In particular, every hereditarily atomic domain must be atomic. As for atomicity (and in contrast to Theorem~\ref{thm:ACCP characterization}), it is not true that $\text{Int}(S,R)$ is hereditarily atomic when $R$ is hereditarily atomic and $|S| = \infty$.

\begin{example}
	If $K$ is a finite algebraic extension of $\qq$, then it follows from \cite[Theorem]{rG70} that every subring of $K$ is Noetherian. Since every Noetherian domain is a BFD \cite[Proposition~2.2]{AAZ90}, the field $K$ is hereditarily atomic. In addition, since $K$ is a field, $\text{Int}(S,K) = K[x]$ for every nonempty subset $S$ of $K$. However, $\text{Int}(S,K)$ is not hereditarily atomic because $K[x]$ contains an isomorphic copy of the integral domain $\zz + x\qq[x]$, which we have seen before that is not atomic.
\end{example}

However, there are rings of integer-valued polynomials that are hereditarily atomic. The following example sheds some light upon this observation.

\begin{example}
	Consider the ring of polynomials $\ff_2[x,y]$, where $\ff_2$ is the field consisting of two elements. Observe that $\ff_2[x,y]$ satisfies the ACCP because it is a UFD. Therefore it follows from Theorem~\ref{thm:ACCP characterization} that $\text{Int}(\ff_2[x], \ff_2[x,y])$ satisfies the ACCP. Now the fact that the group of units of $\text{Int}(\ff_2[x], \ff_2[x,y])$ is trivial guarantees that every subring of $\text{Int}(\ff_2[x], \ff_2[x,y])$ satisfies the ACCP and is, therefore, atomic. Hence $\text{Int}(\ff_2[x], \ff_2[x,y])$ is hereditarily atomic and, in particular, $\text{Int}(\ff_2[x])$ is hereditarily atomic.
\end{example}

\bigskip
%%%%%%%%%%%%%%%%%%%%%%%%%%%%%%
%%%%%%%%%%%%%%%%%%%%%%%%%%%%%%
\section{The Bounded and Finite Factorization Properties}
\label{sec:BFD and FFD}

In this section, we turn our attention to the bounded and finite factorization properties in rings of integer-valued polynomials. Some special cases of these two properties are also considered.

\medskip
%%%%%%%%%%%%%%%%%%%%%%%%%
\subsection{The Bounded Factorization Property}

According to \cite[Corollary~7.6]{AAZ91}, for an integral domain~$R$, the ring of integer-valued polynomials $\text{Int}(R)$ is a BFD if and only if $R$ is a BFD. We begin this section with a mild generalization of this property, mirroring part~(2) of Theorem~\ref{thm:ACCP characterization}. We need the following lemma.

\begin{lemma} \label{lem:pulling sequence}
	Let $R$ be an integral domain, and let $S$ be an infinite subset of $R$. Then there exists a sequence $(d_n)_{n \in \nn}$ whose terms are nonzero elements of $R$ satisfying that $d_n f(x) \in R[x]$ for every $f(x) \in \emph{Int}(S,R)$ with $\deg f(x) = n$.
\end{lemma}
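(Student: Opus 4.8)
The plan is to extract, for each $n \in \nn$, a single nonzero element $d_n \in R$ that simultaneously clears denominators for \emph{all} integer-valued polynomials of degree $n$. The natural way to do this is to pick $n+1$ points from the infinite set $S$ and use Lagrange interpolation, since a polynomial of degree at most $n$ is determined by its values at $n+1$ distinct points. More precisely, fix distinct elements $s_0, s_1, \dots, s_n \in S$ (possible since $|S| = \infty$), and set
\[
	d_n := \prod_{0 \le i < j \le n} (s_j - s_i) \in R^*,
\]
the Vandermonde-type product, which is nonzero because $R$ is a domain and the $s_i$ are distinct.

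The key step is to verify that $d_n f(x) \in R[x]$ for every $f(x) \in \text{Int}(S,R)$ with $\deg f(x) = n$. Given such an $f(x) \in K[x]$, the Lagrange interpolation formula through the nodes $s_0, \dots, s_n$ gives
\[
	f(x) = \sum_{i=0}^n f(s_i) \prod_{\substack{0 \le j \le n \\ j \neq i}} \frac{x - s_j}{s_i - s_j}.
\]
Each $f(s_i)$ lies in $R$ because $f(x) \in \text{Int}(S,R)$ and $s_i \in S$. The denominator appearing in the $i$-th term is $\prod_{j \neq i}(s_i - s_j)$, which divides $d_n$ in $R$ (up to sign), since $d_n$ is the product of \emph{all} the differences $s_j - s_i$ with $i < j$; in particular $d_n / \prod_{j\neq i}(s_i - s_j) \in R$ for each $i$. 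Multiplying the interpolation formula through by $d_n$ therefore yields $d_n f(x) = \sum_{i=0}^n f(s_i)\, \big(d_n / \prod_{j \neq i}(s_i - s_j)\big) \prod_{j \neq i}(x - s_j)$, which is visibly a polynomial with coefficients in $R$. Running this construction for each $n \in \nn$ produces the desired sequence $(d_n)_{n \in \nn}$.

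The only point requiring a little care is the divisibility claim $\prod_{j \neq i}(s_i - s_j) \mid_R d_n$, which is a purely combinatorial bookkeeping fact about which differences occur: up to sign, the factors of $\prod_{j\neq i}(s_i - s_j)$ are exactly the differences involving the index $i$, and these all occur among the factors of the full product $d_n$ (this is where one uses that $R$ is a domain so that signs are the only ambiguity). I expect this to be the main — though still routine — obstacle; everything else is immediate from the fact that $f$ takes $S$ into $R$ and from basic properties of Lagrange interpolation over the field $K$. Note also that a cruder choice such as $d_n := \big(\prod_{0 \le i < j \le n}(s_j - s_i)\big)$ works without even tracking the finer divisibilities, since it is divisible by each individual denominator; so if one prefers, one can avoid the bookkeeping entirely by taking $d_n$ to be any common multiple in $R$ of the finitely many denominators $\prod_{j \neq i}(s_i - s_j)$ for $i \in \ldb 0, n \rdb$, for instance their product.
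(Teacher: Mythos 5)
Your proof is correct and follows essentially the same route as the paper: both fix $n+1$ distinct points of $S$ and take $d_n$ to be the Vandermonde product $\prod_{0 \le i < j \le n}(s_j - s_i)$, the only difference being that you invert the evaluation map via the explicit Lagrange interpolation formula while the paper applies Cramer's rule to the Vandermonde system. Your divisibility bookkeeping for $\prod_{j \neq i}(s_i - s_j) \mid_R d_n$ is sound, so no gap remains.
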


\begin{proof}
	Let $(s_n)_{n \in \nn}$ be a sequence whose terms are pairwise different elements of $S$. Fix $n \in \nn$, and consider the matrix $M_n := (s_i^j)_{0 \le i,j \le n}$. Now set $d_n := \det M_n = \prod_{0 \le i < j \le n} (s_j - s_i) \in R^*$ (i.e., $d_n$ is the Vandermonde determinant of $M_n$). Take $f(x) \in \text{Int}(S,R)$ with $\deg f(x)  = n$, write $f(x) = \sum_{i=0}^n c_i x^i$ for some $c_0, \dots, c_n \in \qf(R)$, and set $v := (c_0, \dots, c_n)$. Since $f(s_i) \in R$ for every $i \in \ldb 0, n \rdb$, it follows that $M_nv^T = (f(s_0), \dots, f(s_n))^T \in R^{n+1}$. Then Cramer's Rule guarantees that $(\det M_n) c_i \in R$ for every $i \in \ldb 0, n \rdb$ and, as a result, $d_n f(x) \in R[x]$. Thus, we have constructed the desired sequence $(d_n)_{n \in \nn}$.
\end{proof}

\begin{prop} \label{prop:BF characterization}
	Let $R$ be an integral domain, and let $S$ be an infinite subset of $R$. Then $\emph{Int}(S,R)$ is a BFD if and only if $R$ is a BFD.
\end{prop}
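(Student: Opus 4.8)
The plan is to prove both implications by transferring bounded factorization between $R$ and $\text{Int}(S,R)$, using Lemma~\ref{lem:pulling sequence} as the bridge in the harder direction. Recall that a domain is a BFD if and only if its multiplicative monoid is a BFM, and that a monoid is a BFM precisely when it admits a length function, i.e., a monoid homomorphism $\ell$ into $(\nn_0,+)$ with $\ell(a) > 0$ for every nonunit $a$ (equivalently, $\sup \mathsf{L}(b) < \infty$ for all $b$). Both directions will be obtained by producing such a length function on one monoid out of one on the other.

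\smallskip
First I would handle the easy implication: if $\text{Int}(S,R)$ is a BFD, then $R$ is a BFD. Since $|S| = \infty$ and $R$ is not assumed to be a field (the field case being trivial), Proposition~\ref{prop:sufficient condition for atomicity} is not even needed here; rather, I would observe that $R^\ast$ is a divisor-closed submonoid of $\text{Int}(S,R)^\ast$ (constants dividing constants in $K[x]$ stay constant) and that $\text{Int}(S,R)^\times = R^\times$. As noted in the preliminaries, if $M$ is a BFM then every submonoid $N$ with $N^\times = M^\times \cap N$ is a BFM; applying this with $N = R^\ast$ gives that $R^\ast$ is a BFM, i.e., $R$ is a BFD.

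\smallskip
The substantive direction is the converse: assume $R$ is a BFD; show $\text{Int}(S,R)$ is a BFD. Fix a length function $\ell$ on $R^\ast$ witnessing the BFD property, and let $(d_n)_{n\in\nn}$ be the sequence furnished by Lemma~\ref{lem:pulling sequence}, so that $d_n f(x) \in R[x]$ whenever $f \in \text{Int}(S,R)$ has degree $n$. The idea is to define, for a nonzero $f \in \text{Int}(S,R)$ of degree $n$, a quantity controlling its factorization lengths in terms of the degree of $f$ plus an $\ell$-measurement of the $R[x]$-content of $d_n f$. Concretely, given a factorization $f = g_1 \cdots g_k$ in $\text{Int}(S,R)$ into nonunits, separate the factors of positive degree from the constant factors: if $r$ of the $g_i$ have degree $\ge 1$, then $r \le \deg f = n$. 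For the constant factors, multiply the whole factorization by an appropriate product of the $d_j$'s to push everything into $R[x]$, take a suitable coefficient (a nonzero coefficient of $d_nf$, or more robustly the gcd-free content in $R[x]$ when $R$ is a BFD so that contents behave well), and bound the number of constant factors by an expression of the form $\max\mathsf{L}_R(\text{that coefficient}) + (\text{correction from the }d_j\text{'s})$, which depends only on $f$ and not on the chosen factorization. Summing the two bounds yields $\sup \mathsf{L}_{\text{Int}(S,R)}(f) < \infty$, as desired. I expect the main obstacle to be making the bound on the number of constant factors genuinely uniform over all factorizations: the positive-degree factors can absorb varying amounts of "constant content" as one multiplies through by the $d_j$'s, so one must be careful to phrase the estimate against a fixed element of $R^\ast$ (for instance, by noting that the product of the constant parts of the $g_i$ divides a fixed nonzero element of $R$ obtained from $f$ and finitely many $d_j$'s, and then invoking that $R^\ast$ is a BFM). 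Once that fixed divisibility is isolated, the bound falls out of the BFD hypothesis on $R$ together with the degree bound, exactly paralleling the argument sketched for Grams' ring in the previous section.
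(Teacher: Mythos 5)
Your proposal is correct, and its easy direction coincides with the paper's: both deduce that $R$ is a BFD from the fact that $R^\ast$ is a submonoid of $\text{Int}(S,R)^\ast$ with $\text{Int}(S,R)^\times \cap R^\ast = R^\times$. For the substantive direction the paper also starts from Lemma~\ref{lem:pulling sequence}, but then simply records the inclusions $R[x] \subseteq \text{Int}(S,R) \subseteq R + xK[x]$ and outsources the rest to \cite[Theorem~7.5]{AAZ91}; you instead prove the required bound directly, and your argument does go through. The clean way to finish your sketch: given a factorization $f = g_1 \cdots g_k$ into nonunits of $\text{Int}(S,R)$ with $\deg f = n$, at most $n$ of the $g_i$ have positive degree; writing $c$ for the product of the constant factors and $h$ for the product of the remaining ones, you have $\deg h = n$, so $d_n h \in R[x]$ and hence $c$ divides the leading coefficient $d_n c_f$ of $d_n f$ in $R$ --- a fixed nonzero element of $R$ independent of the chosen factorization --- so the number of constant factors is at most $\max \mathsf{L}_R(d_n c_f)$ (atomicity of $R$ refines a product of $m$ nonunits dividing $d_n c_f$ into at least $m$ irreducibles). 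In particular, the ``correction from the $d_j$'s'' and the appeal to contents in your sketch are unnecessary, and the worry about positive-degree factors absorbing constant content is resolved exactly as you indicate. One small blemish: a BFM is characterized by the existence of a \emph{superadditive} length function ($\ell(ab) \ge \ell(a) + \ell(b)$ with $\ell > 0$ on nonunits), not necessarily a monoid homomorphism into $(\nn_0,+)$ as you assert; since you never actually use that framing, the proof is unaffected. Overall your route is more self-contained than the paper's, at the cost of reproving a special case of the cited theorem of Anderson--Anderson--Zafrullah.
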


\begin{proof}
	Because $\text{Int}(S,R)^\times \cap R = R^\times$, the ring $R$ is a BFD provided that $\text{Int}(S,R)$ is a BFD, and so the direct implication follows. For the reverse implication, suppose that $R$ is a BFD and set $K := \qf(R)$. By virtue of Lemma~\ref{lem:pulling sequence}, there is a sequence $(d_n)_{n \in \nn}$ whose terms are nonzero elements of $R$ such that $d_n f(x) \in R[x]$ for every $f(x) \in \text{Int}(S,R)$ with $\deg f(x) \le n$. Now since $R[x] \subseteq \text{Int}(S,R) \subseteq R + x K[x]$, it follows from \cite[Theorem~7.5]{AAZ91} that $\text{Int}(S,R)$ is also a BFD.
\end{proof}

\begin{cor} \cite[Corollary~7.6]{AAZ91} \label{cor:IVP BFD} 
	For an integral domain $R$, the ring $\emph{Int}(R)$ is a BFD if and only if~$R$ is a BFD.
\end{cor}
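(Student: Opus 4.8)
The plan is to obtain this as an immediate specialization of Proposition~\ref{prop:BF characterization} by taking $S := R$. The only subtlety is that Proposition~\ref{prop:BF characterization} requires $S$ to be infinite, so I would split the argument according to whether $R$ is finite or infinite.

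First, I would suppose that $R$ is infinite. Then $S := R$ is an infinite subset of $R$, and Proposition~\ref{prop:BF characterization} applies verbatim, yielding that $\text{Int}(R) = \text{Int}(R,R)$ is a BFD if and only if $R$ is a BFD.

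It then remains to treat the case where $R$ is finite. Here I would use the elementary fact that a finite integral domain is a field, together with the observation that for any field $R$ one has $\text{Int}(R) = R[x]$ directly from the definition: since $\qf(R) = R$, the chain of inclusions $R[x] \subseteq \text{Int}(R) \subseteq \qf(R)[x] = R[x]$ collapses. Consequently $\text{Int}(R)$ is a polynomial ring over a field, hence a UFD, and in particular a BFD; and $R$, being a field, is trivially a BFD. Thus the desired equivalence also holds in this case.

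I do not anticipate any genuine obstacle: the corollary is essentially the case $S = R$ of the preceding proposition, with the degenerate case handled by the remark that finite domains are fields. All the substantive work has already been carried out in Lemma~\ref{lem:pulling sequence} and in the cited structural result \cite[Theorem~7.5]{AAZ91} concerning rings sandwiched between $R[x]$ and $R + xK[x]$.
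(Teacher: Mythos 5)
Your proposal is correct and follows the same route as the paper: specialize Proposition~\ref{prop:BF characterization} to $S=R$ when $R$ is infinite, and observe that a finite domain is a field, so $\text{Int}(R)=R[x]$ is a UFD (hence a BFD) in the remaining case. No gaps.
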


\begin{proof}
	If $|R| = \infty$, then the corollary is a special case of Proposition~\ref{prop:BF characterization}. Suppose, therefore, that $|R| < \infty$. In this case, $R$ is a (finite) field and, therefore, $\text{Int}(R) = R[x]$ is a UFD. Hence both $\text{Int}(R)$ and $R$ are BFDs.
%	 the direct implication is trivial. For the converse, note that $\text{Int}(R)^\times = R^* = R[x]^\times \cap \text{Int}(R)$. This, along with the fact that $R[x]$ is a BFD (in fact, a UFD), ensures that $\text{Int}(R)$ is also a BFD.
\end{proof}

We observe that the assumption $|S| = \infty$ is not superfluous for the direct implication of Proposition~\ref{prop:BF characterization} to hold. Indeed, although $\zz$ is a BFD, we have seen before that $\text{Int}(\{0\},\zz) = \zz + x\qq[x]$ is not even atomic.
\smallskip

Theorem~\ref{thm:ACCP characterization} and Proposition~\ref{prop:BF characterization}, used in tandem, allow us to construct rings of integer-valued polynomials that satisfy the ACCP but are not BFDs.

\begin{example}
	For a field $F$, consider the monoid ring $R := F[y;M]$, where $M$ is the additive submonoid $\langle 1/p \mid p \in \pp \rangle$ of $\qq$. It was argued in \cite[Example~2]{AAZ90} that $R$ satisfies the ACCP but is not a BFD. In light of Theorem~\ref{thm:ACCP characterization} and Proposition~\ref{prop:BF characterization}, for any infinite subset $S$ of $R$, we obtain that $\text{Int}(S,R)$ satisfies the ACCP but is not a BFD.
\end{example}

A special class of BFDs is that of half-factorial domains. Following A. Zaks~\cite{aZ76}, we say that an integral domain $R$ is a \emph{half-factorial domain} (\emph{HFD}) if $R$ is atomic and every two factorizations of the same element of $R$ have the same length. Unlike the properties of satisfying the ACCP and being a BFD, being an HFD does not transfer from an integral domain to its ring of integer-valued polynomials.

\begin{example} \label{ex:HF does not transfer}
	Since $\zz$ is a UFD, it is also an HFD. It is not hard to verify that $\binom{x}{n}$ is an irreducible polynomial in $\text{Int}(\zz)$ for every $n \in \nn$ (see \cite[Proposition~6]{CC16}). The identity $2 \cdot 3 \cdot \binom{x}{6} =  (x-5) \cdot \binom{x}{5}$ clearly holds, and its sides yield factorizations of the integer-valued polynomial $p(x) = 6 \binom{x}{6}$. As a result, $\{2,3\} \subseteq \mathsf{L}(p(x))$, which implies that $\text{Int}(\zz)$ is not an HFD. Thus, there are rings of integer-valued polynomials that are BFDs but not HFDs ($\text{Int}(\zz)$ is a BFD by Proposition~\ref{prop:BF characterization}). We emphasize that $\text{Int}(\zz)$ has infinite elasticity, which is significantly stronger than failing half-factoriality (see \cite[Theorem~1.6]{CC95} for details).
\end{example}

Example~\ref{ex:HF does not transfer} also illustrates that being a UFD does not transfer, in general, from an integral domain to its ring of integer-valued polynomials.

\medskip
%%%%%%%%%%%%%%%%%%%%%%%
\subsection{The Finite Factorization Property}
\label{sec:subatomicity}

Now we turn our attention to the finite factorization property. In the next theorem, we provide an analog of Theorem~\ref{thm:ACCP characterization} and Proposition~\ref{prop:BF characterization}.

\begin{theorem} \label{thm:FF characterization}
	Let $R$ be an integral domain, and let $S$ be an infinite subset of $R$. Then $R$ is an FFD if and only if $\emph{Int}(S,R)$ is an FFD.
\end{theorem}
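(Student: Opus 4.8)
The plan is to prove both directions, with the forward implication ($\text{Int}(S,R)$ an FFD $\Rightarrow$ $R$ an FFD) being essentially immediate and the reverse implication being the substantive part. For the forward direction, recall that $R^*$ is a divisor-closed submonoid of $\text{Int}(S,R)^*$ with $R^\times = \text{Int}(S,R)^\times \cap R$; since the FFD property descends to divisor-closed submonoids (more precisely, to submonoids $N$ with $N^\times = M^\times \cap \gp(N)$, as noted in the Preliminary section), $R$ is an FFD whenever $\text{Int}(S,R)$ is. For the reverse direction, first observe that an FFD is in particular a BFD, so by Proposition~\ref{prop:BF characterization} we already know $\text{Int}(S,R)$ is a BFD, hence atomic and ACCP; thus it remains only to show that $\text{Int}(S,R)$ is an idf-domain, i.e., every nonzero element has finitely many non-associate irreducible divisors. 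Equivalently, using the characterization that $M$ is an FFM iff every element lies in only finitely many principal ideals, it suffices to show that each nonzero $f(x) \in \text{Int}(S,R)$ has only finitely many divisors up to associates.

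The key idea for bounding the divisors of a given $f(x) \in \text{Int}(S,R)$ of degree $n$ is to control them in two stages: control the degrees of possible divisors, and then control the finitely many possibilities in each bounded degree. A divisor $g(x)$ of $f(x)$ in $\text{Int}(S,R)$ satisfies $f(x) = g(x) h(x)$ with $h(x) \in \text{Int}(S,R)$, so $\deg g(x) \le n$. Now I would invoke Lemma~\ref{lem:pulling sequence}: there is a fixed nonzero $d := d_n \in R$ with $d \, p(x) \in R[x]$ for every $p(x) \in \text{Int}(S,R)$ of degree at most $n$; in particular $d\, g(x), d\, h(x) \in R[x]$, so $d^2 f(x) = (d g(x))(d h(x))$ is a factorization inside $R[x]$. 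Since $R$ is an FFD, $R[x]$ is an FFD as well (the finite factorization property passes to polynomial extensions — this is standard, e.g.\ \cite{AAZ90}), so $d^2 f(x)$ has only finitely many divisors in $R[x]$ up to associates in $R[x]$. Every divisor $g(x)$ of $f(x)$ in $\text{Int}(S,R)$ thus gives rise to a divisor $d\, g(x)$ of $d^2 f(x)$ in $R[x]$; the map $g(x) \mapsto d\, g(x)$ is injective, and if $d\, g_1(x)$ and $d\, g_2(x)$ are associates in $R[x]$ — hence differ by a unit of $R[x]$, which lies in $R^\times = \text{Int}(S,R)^\times$ — then $g_1(x)$ and $g_2(x)$ are associates in $\text{Int}(S,R)$. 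Therefore the non-associate divisors of $f(x)$ in $\text{Int}(S,R)$ inject into the non-associate divisors of $d^2 f(x)$ in $R[x]$, of which there are finitely many. This shows $\text{Int}(S,R)$ is an idf-domain, completing the reverse direction.

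The main obstacle, and the place requiring the most care, is the bookkeeping around associates: one must be sure that "divisor in $\text{Int}(S,R)$" maps cleanly to "divisor in $R[x]$" and that the notions of associate are compatible, which works precisely because $\text{Int}(S,R)^\times = R^\times = R[x]^\times$ when $R$ is not a field — and if $R$ is a field the statement is trivial since $\text{Int}(S,R) = R[x]$ is a UFD. A secondary technical point is justifying that $R[x]$ is an FFD whenever $R$ is; this is a known transfer result, but if one wants to keep the argument self-contained one can cite it or reprove it via the same degree-plus-leading/trailing-coefficient bounding scheme. I would also double-check that Lemma~\ref{lem:pulling sequence} is applied with the uniform bound "$\deg \le n$" rather than "$\deg = n$"; the lemma as stated gives $d_n$ for degree exactly $n$, but taking a common multiple $d := d_0 d_1 \cdots d_n$ (or re-deriving the lemma with $\le n$) handles both $g$ and $h$ simultaneously, as their degrees can be anything in $\ldb 0, n \rdb$.
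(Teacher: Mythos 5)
Your proof is correct, but the substantive direction is argued by a genuinely different route than the paper's. The paper works directly inside $\text{Int}(S,R)$: it fixes an algebraic closure of $\qf(R)$, writes $p(x) = c_p\prod_{i=1}^n(x-r_i)$, and classifies the divisors of $p(x)$ by the subset $J$ of roots they carry together with their leading coefficient; the possible leading coefficients (up to units) are then injected into the finite interval $[R^\times, d_m d_{n-m}(d_nc_p)R^\times]$ of the group of divisibility, whose finiteness is exactly the characterization of FFDs from \cite{AM96}. You instead multiply by a common denominator $d$ (a product of the $d_k$ from Lemma~\ref{lem:pulling sequence}) to turn any factorization $f = gh$ in $\text{Int}(S,R)$ into the factorization $d^2f = (dg)(dh)$ in $R[x]$, and then invoke the known transfer result that $R$ an FFD implies $R[x]$ an FFD; since $R[x]^\times = R^\times = \text{Int}(S,R)^\times$, the map $g \mapsto dg$ induces an injection of associate classes of divisors of $f$ into those of $d^2f$, and the principal-ideal characterization of FFMs finishes the argument. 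Your version is shorter and more modular, but it leans entirely on the polynomial-ring transfer theorem \cite[Proposition~5.3]{AAZ90}, whose own proof is of comparable depth (and in fact uses a similar root-and-coefficient analysis); the paper's argument is self-contained modulo the elementary interval characterization and gives an explicit parametrization of the divisors. Your two housekeeping points are both handled correctly: degree-$0$ divisors lie in $R$ automatically, and replacing $d_n$ by $d_1\cdots d_n$ (or observing that the Vandermonde construction already works for all degrees at most $n$) covers both factors simultaneously.
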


\begin{proof}
	For the reverse implication, assume that $\text{Int}(S,R)$ is an FFD. Since $\text{Int}(S,R)^\times = R^\times$, we see that $\text{Int}(S,R)^\times \cap \qf(R) = R^\times$. This, together with the fact that $\text{Int}(S,R)$ is an FFD, ensures that~$R$ is also an FFD.
	\smallskip
	
	For the direct implication, assume that $R$ is an FFD. Since $|S| = \infty$, Lemma~\ref{lem:pulling sequence} guarantees the existence of a sequence $(d_n)_{n \in \nn}$ whose terms are nonzero elements of $R$ such that $d_n f(x) \in R[x]$ for every $f(x) \in \text{Int}(S,R)$ with $\deg f(x) = n$. Fix an algebraic closure $K$ of the field $\qf(R)$. Now take a nonzero polynomial $p(x) \in \text{Int}(S,R)$ with degree $n$, and let us argue that $\mathsf{Z}_{\text{Int}(S,R)}(p(x))$ is finite. This is true when $p(x) \in R$ because $R^*$ is both an FFM and a divisor-closed submonoid of $\text{Int}(S,R)^*$. Assume, therefore, that $n \ge 1$. Let $c_p$ be the leading coefficient of $p(x)$, and then write $p(x) = c_p \prod_{i=1}^{n}(x-r_i)$ for some $r_1, \dots, r_n \in K$. As $\deg p(x) = n$, the polynomial $d_n p(x)$ belongs to $R[x]$ and, in particular, $d_n c_p \in R$. Proving that $p(x)$ has only finitely many factorizations in $\text{Int}(S,R)$ amounts to showing that, for each $J \subseteq \ldb 1,n \rdb$, the set
	\[
		D_J := \big\{ q(x) = c_q \prod_{j \in J} (x- r_j) \in \text{Int}(S,R) \mid  \ q(x) \text{ divides } p(x) \text{ in } \text{Int}(S,R) \big\}
	\]
	contains finitely many polynomials up to associates in $\text{Int}(S,R)$. Fix $J \subseteq \ldb 1,n \rdb$, set $m := |J|$, and let $q(x)$ be a polynomial in $D_J$ with leading coefficient $c_q$. Since $q(x)$ and $p(x)/q(x)$ are polynomials in $\text{Int}(S,R)$ with degrees $m$ and $n-m$, respectively, $d_m c_q$ and $d_{n-m}(c_p/c_q)$ both belong to $R$. Let $G(R)$ be the divisibility group of $R$, and note that for every $m \in \ldb 0, n \rdb$ the set
	\[
		C_m := \{d_m rR^\times \in G(R) \mid \ d_m r \in R \ \text{ and } \ d_m r \mid_R d_m d_{n-m}(d_n c_p) \}
	\]
	is precisely the interval $[R^\times, d_m d_{n-m}(d_n c_p)R^\times]$ of $G(R)$. Since $R$ is an FFD, it follows from~\cite[Theorem~1]{AM96} that $|C_m| < \infty$. From $(d_m c_q)(d_n d_{n-m}(c_p/c_q)) = d_m d_{n-m}(d_n c_p) \in R$ and $d_n d_{n-m}(c_p/c_q) \in R$, we obtain that $d_m c_q R^\times \in C_m$. Consider now the map $D_J \to C_m$ determined by $q(x) \mapsto d_m c_q R^\times$. Observe that, for $r,r' \in \qf(R)^\times$, the equality $d_m r R^\times = d_m r' R^\times$ holds if and only if $r/r' \in R^\times$. Hence the map $D_J/R^\times \to C_m$ is well-defined and injective, which implies that $|D_J/R^\times| \le |C_m| < \infty$. Then $p(x)$ has only finitely many non-associate divisors in $\text{Int}(S,R)$. As a consequence, we conclude that $\text{Int}(S,R)$ is an FFD.
%	
%	As $R$ is an FFD, for each $m \in \ldb 0,n \rdb$, there are only finitely many non-associates divisors of $d_m d_{n-m}(d_n c_p)$ in $R$, and so there are only finitely many choices for $c_q \in \qf(R)$ such that $d_m c_q \mid_R d_m d_{n-m}(d_n c_p)$.
%	
%	Therefore for each $m \in \nn$, th
%	
%	As a result, the ideal $(c_p d!^2)$ is contained in the principal ideal $(c_q d!)$. Since $R$ is an FFD, there are finitely many choices, up to associates, to choose the leading coefficient $c_q$ of $q(x)$. Because $q(x)$ must have the form $c_q \prod_{j \in J} (x - r_j)$ for some $J \subseteq \ldb 0,d \rdb$, we can conclude that there are finitely many divisors $q(x)$ of $p(x)$ in $\text{Int}(S,R)$. As a consequence, $\text{Int}(S,R)$ is an FFD.
\end{proof}

\begin{cor} \label{cor:the ring of integer-valued polynomials is FFD} 
	For an integral domain $R$, the ring $\emph{Int}(R)$ is an FFD if and only if~$R$ is an FFD.
%	\smallskip
%	
%	If $R$ is an FFD, then $\emph{Int}(R)$ is an FFD.
\end{cor}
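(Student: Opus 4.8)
The plan is to deduce this corollary from Theorem~\ref{thm:FF characterization} exactly as Corollary~\ref{cor:IVP BFD} was deduced from Proposition~\ref{prop:BF characterization}, namely by splitting on the cardinality of $R$. When $|R| = \infty$, the set $S := R$ is an infinite subset of $R$, so Theorem~\ref{thm:FF characterization} applied with this choice of $S$ immediately gives that $\text{Int}(R)$ is an FFD if and only if $R$ is an FFD, and there is nothing more to do.

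When $|R| < \infty$, I would first recall that a finite integral domain is necessarily a field, so $R$ is a finite field; in particular $\qf(R) = R$, and hence $\text{Int}(R) = R[x]$. Since $R$ is a field, both $R$ and $R[x]$ are UFDs, hence FFDs, so the biconditional holds trivially because both of its sides are true. Combining the two cases yields the statement.

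The main (and essentially only) point requiring care is that Theorem~\ref{thm:FF characterization} has the standing hypothesis $|S| = \infty$, so the finite case genuinely has to be handled on its own; once one remembers that a finite integral domain is a field and that $\text{Int}$ of a field is just the polynomial ring, the finite case collapses. I do not anticipate any real obstacle, as this is a routine specialization of the preceding theorem.
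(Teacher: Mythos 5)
Your proposal is correct and follows the paper's argument exactly: for infinite $R$ you apply Theorem~\ref{thm:FF characterization} with $S = R$, and for finite $R$ you note that $R$ is a field, so $\text{Int}(R) = R[x]$ and both rings are UFDs, hence FFDs. No issues.
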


\begin{proof}
	When $|R| = \infty$, this is a special case of Theorem~\ref{thm:FF characterization}. Assume that $|R| < \infty$. In this case, $R$ is a field and, therefore, $\text{Int}(R) = R[x]$ is a UFD. Hence both $\text{Int}(R)$ and $R$ are FFDs. 
	%and $\text{Int}(R)^\times = R \setminus \{0\} = R[x]^\times \cap \qf(\text{Int}(R))$. This implies that $\text{Int}(R)$ is also an FFD (see \cite[page~17]{AAZ90}).
\end{proof}

Corollary~\ref{cor:the ring of integer-valued polynomials is FFD} allows us to identify rings of integer-valued polynomials that are FFDs but not UFDs.

\begin{example}
	We have seen in Example~\ref{ex:HF does not transfer} that $\text{Int}(\zz)$ is not an HFD. However, since $\zz$ is an FFD, Corollary~\ref{cor:the ring of integer-valued polynomials is FFD} guarantees that $\text{Int}({\zz})$ is an FFD.
\end{example}

Following D. D. Anderson and B. Mullins~\cite{AM96}, we say that an integral domain~$R$ is a \emph{strong finite factorization domain} (\emph{SFFD}) provided that every nonzero element of $R$ has only finitely many divisors. One can verify that an integral domain is an SFFD if and only if it is an FFD with finite group of units (see \cite[Theorem~5]{AM96} for additional characterizations).

\begin{cor}
	Let $R$ be an integral domain, and let $S$ be an infinite subset of $R$. Then $\emph{Int}(S,R)$ is an SFFD if and only if $R$ is an SFFD.
\end{cor}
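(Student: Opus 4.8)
The plan is to reduce the statement to Theorem~\ref{thm:FF characterization} together with the characterization recorded just above the corollary: an integral domain is an SFFD precisely when it is an FFD whose group of units is finite. The one structural fact about $\text{Int}(S,R)$ that I will invoke is the equality $\text{Int}(S,R)^\times = R^\times$, which holds because $R^*$ is a divisor-closed submonoid of $\text{Int}(S,R)^*$.

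Suppose first that $\text{Int}(S,R)$ is an SFFD. Then $\text{Int}(S,R)$ is, in particular, an FFD, so Theorem~\ref{thm:FF characterization} yields that $R$ is an FFD; moreover $R^\times = \text{Int}(S,R)^\times$ is finite, and therefore $R$ is an FFD with finite group of units, i.e., an SFFD. Conversely, suppose $R$ is an SFFD. Then $R$ is an FFD with $|R^\times| < \infty$, so Theorem~\ref{thm:FF characterization} gives that $\text{Int}(S,R)$ is an FFD, and its group of units $\text{Int}(S,R)^\times = R^\times$ is finite; hence $\text{Int}(S,R)$ is an SFFD.

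I do not anticipate any genuine obstacle here: the corollary is a formal consequence of the FFD transfer result in Theorem~\ref{thm:FF characterization} and the equality of unit groups. The only point requiring minor care is to argue through the equivalence ``SFFD $\Longleftrightarrow$ FFD with finite unit group'' rather than through the raw definition ``every nonzero element has only finitely many divisors'', since tracking the exact divisor count across the $\text{Int}$ construction is less transparent than handling the finite-factorization property and the group of units separately.
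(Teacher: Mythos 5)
Your proof is correct and follows exactly the paper's argument: both reduce the statement to Theorem~\ref{thm:FF characterization} via the characterization of an SFFD as an FFD with finite group of units, together with the equality $\text{Int}(S,R)^\times = R^\times$. Nothing further is needed.
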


\begin{proof}
	The ring $\text{Int}(S,R)$ is an SFFD if and only if it is an FFD and $\text{Int}(S,R)^\times = R^\times$ is finite. In light of Theorem~\ref{thm:FF characterization}, this happens if and only if $R$ is an FFD and $R^\times$ is finite, which is equivalent to the fact that $R$ is an SFFD.
\end{proof}

We are now in a position to exhibit rings of integer-valued polynomials satisfying the bounded factorization property but not the finite factorization property.

\begin{example}
	Let $F$ be a field, and let $M$ be the additive submonoid $\{0\} \cup \rr_{\ge 1}$ of $\rr$. It follows from \cite[Proposition~4.5]{fG19} that $M$ is a BFM, and one can readily check that $\mathcal{A}(M) = [1,2)$. Therefore \cite[Theorem~13.3]{AJ15} guarantees that the monoid ring $R := F[y;M]$ is a BFD. On the other hand, we can infer from the equalities $y^3 = y^{\frac 32 + \frac 1n} y^{\frac 32 - \frac 1n}$ (for all $n \in \nn_{\ge 3}$) that $R$ is not an FFD. Now Proposition~\ref{prop:BF characterization} and Theorem~\ref{thm:FF characterization} in tandem allow us to conclude that for every infinite subset $S$ of $R$ the ring of integer-valued polynomials $\text{Int}(S,R)$ is a BFD that is not an FFD.
\end{example}
\smallskip

The class of FFDs consisting of integral domains with only finitely many irreducibles up to associates has been well investigated. Following D.~D. Anderson and J.~L. Mott~\cite{AM92}, we call such integral domains \emph{Cohen-Kaplansky domains} (\emph{CKD}). Cohen-Kaplansky domains were first studied by I.~S. Cohen and I. Kaplansky in~\cite{CK46}. Although it follows from Theorem~\ref{thm:FF characterization} that there are plenty of rings of integer-valued polynomials that are FFDs, none of them happens to be a CKD, as the following proposition indicates.

\begin{prop}
	For any integral domain $R$ and $S \subseteq R$, the ring $\emph{Int}(S,R)$ is not a CKD.
\end{prop}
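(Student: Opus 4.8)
The plan is to show that $\text{Int}(S,R)$ always contains infinitely many pairwise non-associate irreducibles, which immediately precludes it from being a CKD. There are two cases to dispose of, depending on whether $\text{Int}(S,R)$ is even atomic, but since a CKD is by definition atomic, it suffices to assume $\text{Int}(S,R)$ is atomic and derive a contradiction (alternatively, one simply exhibits infinitely many non-associate atoms directly). By Corollary~\ref{cor:factorization properties of Int(R,S) when S is finite}, if $\text{Int}(S,R)$ is atomic and not a field then $|S| = \infty$; and if $R$ is a field then $\text{Int}(S,R) = R[x]$, which is a polynomial ring over a field and hence has infinitely many non-associate irreducibles (the monic irreducible polynomials), so it is not a CKD. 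Thus we may assume $R$ is not a field and $|S| = \infty$.

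First I would fix a prime element or at least a nonzero nonunit $a \in R$ (which exists since $R$ is not a field; if $R$ is not atomic one still has nonunits, and a CKD being atomic forces $R$ itself to be reasonably behaved — but we only need one nonunit $a$). The key point is that in $R[x] \subseteq \text{Int}(S,R)$, the linear polynomials $x - s$ for $s \in S$ are pairwise non-associate (their difference of roots is nonzero in the fraction field, and $\text{Int}(S,R)^\times = R^\times$ by the remark following Theorem~\ref{thm:IVP are polynomial rings}, so $x - s$ and $x - s'$ associate forces $s = s'$). Since $|S| = \infty$, this already gives infinitely many pairwise non-associate nonunits of degree one. The remaining task is to produce from these infinitely many non-associate \emph{irreducibles}.

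The cleanest route: take any nonzero nonunit $g(x) \in \text{Int}(S,R)$ — for instance $g(x) = a$ with $a \in R$ a nonunit — and note that in a CKD every element has only finitely many divisors up to associates (a CKD is an FFD, indeed an SFFD is not needed; an FFD has $|\mathsf{Z}(b)| < \infty$ and with finitely many atoms overall the monoid is ``finitely generated up to units''). Concretely, in a CKD with atoms $\pi_1, \dots, \pi_m$ up to associates, every nonzero nonunit is associate to $u\,\pi_1^{e_1}\cdots\pi_m^{e_m}$, so the nonunits lie in the divisibility-closure of finitely many atoms. But consider the element $p(x) := a\binom{x-s_0}{1}\cdots$ — more simply, pick distinct $s_1, \dots, s_{m+1} \in S$ and look at $h(x) := (x - s_1)\cdots(x - s_{m+1})$. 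In a CKD, $h(x)$ factors into at most $m$ non-associate atoms (with multiplicity), yet each of the $m+1$ linear factors $x - s_i$ is a nonunit dividing $h(x)$, and they are pairwise non-associate; since any atom dividing $h(x)$ in $\text{Int}(S,R)$ is (associate to) one of finitely many atoms, and each $x - s_i$ is a product of such atoms, a counting/degree argument on $\text{Int}(S,R)$ forces at least one $x-s_i$ to itself be... — this requires care. The slicker argument avoids atoms of linear polynomials entirely.

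\textbf{The approach I would actually commit to.} Assume for contradiction $\text{Int}(S,R)$ is a CKD with set of atoms $\{p_1(x), \dots, p_n(x)\}$ up to associates, and (as reduced above) $R$ is not a field and $|S|=\infty$. Since a CKD satisfies the ACCP, Theorem~\ref{thm:ACCP characterization}(2) gives that $R$ satisfies the ACCP, so $R$ is atomic; being a divisor-closed submonoid, $R^*$ has all its atoms among $\{p_1(x), \dots, p_n(x)\}$, hence $R$ is a CKD too, say with atoms $q_1, \dots, q_k \in R$. Now in $R[x] \subseteq \text{Int}(S,R)$, consider the polynomials $x - s$ for $s \in S$. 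Each is a nonzero nonunit, hence a product of atoms of $\text{Int}(S,R)$; comparing degrees, $x - s$ is either itself an atom or a product of an atom of degree $1$ with units and atoms from $R$ — in any case $x - s$ is divisible by an atom $p_{i(s)}(x)$ of degree exactly $1$. There are only finitely many such atoms of degree $1$ among $p_1(x), \dots, p_n(x)$; say they are $p_1(x), \dots, p_r(x)$ with $r \le n$. Then the map $S \to \{1, \dots, r\}$ sending $s \mapsto i(s)$ cannot be injective (as $|S| = \infty$), so there exist $s \neq s'$ in $S$ with $p_{i(s)}(x) = p_{i(s')}(x) =: p(x)$, an atom of degree $1$ dividing both $x - s$ and $x - s'$. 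Since $\deg p = \deg(x-s) = 1$, we get $x - s = u\,p(x)$ and $x - s' = v\,p(x)$ for units $u, v$, whence $x - s$ and $x - s'$ are associate, forcing $s = s'$ — a contradiction. Hence $\text{Int}(S,R)$ is not a CKD.

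\textbf{The main obstacle} is handling the non-atomic case and the field case cleanly at the outset, and — within the CKD case — justifying that $x - s$ must be divisible by a \emph{degree-one} atom: one needs that $\text{Int}(S,R) \subseteq R + xK[x]$ has no units of positive degree (true, $\text{Int}(S,R)^\times = R^\times$) and that in any factorization of $x - s$ into irreducibles, exactly one factor has degree $1$ and the rest have degree $0$ (this is just additivity of degree plus the fact that $x-s$ has degree $1$). Once that is in hand the pigeonhole step is immediate. I would present the field case and the reduction $|S| = \infty$ first (citing Corollary~\ref{cor:factorization properties of Int(R,S) when S is finite} and Theorem~\ref{thm:IVP are polynomial rings}), then run the pigeonhole argument above.
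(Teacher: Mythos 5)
Your pigeonhole argument is essentially correct, but it takes a genuinely different route from the paper. The paper does not pass through atomicity at all: after disposing of the cases $S = \emptyset$ and $R$ finite (where $\mathrm{Int}(S,R) = K[x]$ and a Euclid-style argument applies), it fixes $s \in S$ and exhibits the explicit infinite family $a_r(x) = rx - rs + 1$, $r \in R^*$, of pairwise non-associate irreducibles; irreducibility is immediate from $a_r(s) = 1$, since any constant factor $t$ must satisfy $t f(s) = 1$ and hence be a unit. This is more economical than your approach --- it needs only $S \neq \emptyset$ and $|R| = \infty$, never invokes $|S| = \infty$ or the ACCP, and yields the stronger corollary that $\mathrm{Int}(S,R)$ is never antimatter. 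Your argument instead uses the atomicity built into the CKD definition, reduces to $|S| = \infty$, and counts degree-one atoms dividing the $x - s$; that is a perfectly viable alternative, but it proves only the stated proposition and not the antimatter corollary.

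Two repairable slips in your write-up. First, your reduction ``atomic and $R$ not a field implies $|S| = \infty$'' quietly uses Proposition~\ref{prop:sufficient condition for atomicity}, whose hypothesis requires $S$ nonempty; the case $S = \emptyset$ (with $R$ not a field) gives $\mathrm{Int}(\emptyset, R) = K[x]$, which is atomic, so it escapes your reduction and must be handled separately --- fortunately by the same polynomial-ring-over-a-field observation you already made. Second, in the final step you assert that a degree-one atom $p(x)$ dividing $x - s$ forces $x - s = u\,p(x)$ with $u$ a unit; this is false in general, since the degree-zero cofactor lies in $R$ but need not be a unit (e.g.\ $x = 2 \cdot (x/2)$ in $\mathrm{Int}(2\zz,\zz)$). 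The contradiction survives anyway: from $x - s = c\,p(x)$ and $x - s' = c'\,p(x)$ with $c, c' \in R$, comparing leading coefficients gives $c = c'$ and hence $s = s'$, but the intermediate claim that $x-s$ and $x-s'$ are associate should be dropped.
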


\begin{proof}
	Let $R$ be an integral domain with quotient field $K$, and let $S$ be a subset of $R$. If $S$ is empty, then $\text{Int}(S,R) = K[x]$, which contains infinitely many non-associate irreducibles for if $a_1(x), \dots, a_k(x)$ were the only irreducibles in $K[x]$ up to associates, then the irreducible $a_1(x) \cdots a_k(x) + 1$ would be an associate of $a_i(x)$ for some $i \in \ldb 1,k \rdb$, which is clearly not possible. Now observe that if $R$ is finite, then it is a field and so the equality $\text{Int}(S,R) = K[x]$ holds once again, whence $\text{Int}(S,R)$ contains infinitely many non-associate irreducibles. Thus, $\text{Int}(S,R)$ is not a CKD.
	
	Suppose then that $S$ is not empty and $R$ is not finite. Fix $s \in S$ and, for each $r \in R^*$ consider the polynomial $a_r(x) = rx - rs + 1 \in \text{Int}(S,R)$. We claim that $a_r(x)$ is irreducible in $\text{Int}(S,R)$ for all $r \in R^*$. To see this, fix $r \in R^*$ and write $a_r(x) = t f(x)$, where $t \in R$ and $f(x) \in \text{Int}(S,R)$. Observe that $t^{-1} = a_r(s)t^{-1} = f(s) \in R$, which means that $t \in R^\times$. Hence $\{a_r(x) \mid r \in R^*\}$ is an infinite set of irreducibles of $\text{Int}(S,R)$, and it follows immediately that no two distinct elements of this set can be associates. Thus, we can also conclude in this case that $\text{Int}(S,R)$ is not a CKD.
\end{proof}

\begin{cor}
	For any integral domain $R$ and $S \subseteq R$, the ring $\emph{Int}(S,R)$ is not antimatter.
\end{cor}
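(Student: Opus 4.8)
The plan is to observe that the statement follows at once from the \emph{proof} of the preceding proposition rather than merely from its statement, since that proof exhibits an honest irreducible element of $\text{Int}(S,R)$ in every case, and an antimatter domain is, by definition, one that contains no irreducibles.

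First I would dispose of the degenerate cases exactly as in the proposition: if $S = \emptyset$, or if $R$ is finite (hence a field), then $\text{Int}(S,R) = K[x]$, where $K = \qf(R)$, and a polynomial ring over a field plainly contains irreducibles (for instance the linear polynomial $x$). In the remaining case, where $S \neq \emptyset$ and $R$ is infinite, I would invoke the computation already carried out above: fixing $s \in S$ and $r \in R^*$, the polynomial $a_r(x) = rx - rs + 1$ lies in $\text{Int}(S,R)$ and was shown to be irreducible there (if $a_r(x) = t f(x)$ with $t \in R$ and $f(x) \in \text{Int}(S,R)$, then $t^{-1} = a_r(s) t^{-1} = f(s) \in R$, forcing $t \in R^\times$). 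Either way, $\text{Int}(S,R)$ possesses at least one irreducible, so it cannot be antimatter.

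There is essentially no obstacle here; the only subtlety worth flagging is that one should not try to deduce the corollary from ``$\text{Int}(S,R)$ is not a CKD'' alone, because a field is vacuously both a CKD and an antimatter domain. This is a non-issue, however, since $\text{Int}(S,R) \supseteq R[x]$ always contains the nonunit $x$ (recall $\text{Int}(S,R)^\times = R^\times$), so $\text{Int}(S,R)$ is never a field; and in any event the argument above produces irreducibles directly, bypassing this concern entirely.
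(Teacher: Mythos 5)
Your proposal is correct and is exactly the argument the paper intends: the corollary is stated with no separate proof precisely because the proof of the preceding proposition already exhibits explicit irreducibles of $\text{Int}(S,R)$ in every case (irreducibles of $K[x]$ when $S=\emptyset$ or $R$ is finite, and the polynomials $a_r(x)=rx-rs+1$ otherwise), which is more than enough to rule out being antimatter. Your remark that the corollary cannot be read off from the mere statement ``$\text{Int}(S,R)$ is not a CKD'' is a fair observation (a non-field antimatter domain is also not a CKD, so ``not a CKD'' alone does not force the existence of an irreducible), and your direct route avoids the issue.
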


\bigskip
%%%%%%%%%%%%%%%%%
%%%%%%%%%%%%%%%%%
\section{On Irreducible Divisors}
\label{sec:irreducible divisor domains}

In this final section, we study divisibility by irreducibles in rings of integer-valued polynomials. We consider two natural relaxations of atomicity and the finite factorization property: the Furstenberg and the irreducible-divisor-finite properties, respectively.

\medskip
%%%%%%%%%%%%%%%%%%
\subsection{Furstenberg Domains}

Following~\cite{pC17}, we say that an integral domain is a \emph{Furstenberg domain} if every nonunit element is divisible by an irreducible. Clearly, every atomic domain is a Furstenberg domain. It turns out that $\text{Int}(S,R)$ is a Furstenberg domain if and only if $R$ is a Furstenberg domain, regardless the cardinality of $S$.

\begin{prop} \label{prop:Furstenberg characterization}
	Let $R$ be an integral domain, and let $S$ be a subset of $R$. Then $\emph{Int}(S,R)$ is a Furstenberg domain if and only if $R$ is a Furstenberg domain.
\end{prop}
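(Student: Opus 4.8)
The plan is to prove both implications by working with divisibility relations and exploiting the ring inclusion $R[x] \subseteq \mathrm{Int}(S,R) \subseteq K[x]$ together with the fact that $R^*$ is a divisor-closed submonoid of $\mathrm{Int}(S,R)^*$ with $\mathrm{Int}(S,R)^\times = R^\times$. First I would handle the two degenerate cases separately: if $S = \emptyset$ then $\mathrm{Int}(S,R) = K[x]$, which is a PID and hence a Furstenberg domain, and $R$ is a localization-like overring of itself sitting inside its fraction field — actually here I should be careful, so let me instead observe that when $R$ is a field, $\mathrm{Int}(S,R) = K[x] = R[x]$ is a UFD (Furstenberg), matching the fact that a field is vacuously a Furstenberg domain; and when $S=\emptyset$ with $R$ not a field one argues directly. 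For the substantive case I may assume $S \ne \emptyset$ and $R$ is not a field.

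For the forward implication, suppose $\mathrm{Int}(S,R)$ is a Furstenberg domain, and let $r \in R \setminus R^\times$ be a nonzero nonunit. Since $\mathrm{Int}(S,R)^\times = R^\times$, $r$ is a nonunit of $\mathrm{Int}(S,R)$, so some irreducible $q(x) \in \mathrm{Int}(S,R)$ divides $r$ there, say $r = q(x) h(x)$. Comparing degrees forces $\deg q(x) = \deg h(x) = 0$, so $q(x) = q \in R$ and $h(x) = h \in R$ (both constants lie in $K$ but their product is in $R$ and $R$ is integrally closed in... no — rather, a degree-zero element of $\mathrm{Int}(S,R)$ is an element of $K$ that takes $S$ into $R$; picking any $s\in S$ shows it lies in $R$). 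Then $q$ is irreducible in $R$: a nontrivial factorization of $q$ in $R$ would be one in $\mathrm{Int}(S,R)$ since $R$ is divisor-closed, contradicting irreducibility of $q$ in $\mathrm{Int}(S,R)$. Hence $q$ is an irreducible divisor of $r$ in $R$.

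For the reverse implication, suppose $R$ is a Furstenberg domain and let $p(x) \in \mathrm{Int}(S,R)$ be a nonzero nonunit. If $\deg p(x) = 0$ then $p(x) \in R$ is a nonzero nonunit of $R$, so it has an irreducible divisor in $R$, which remains irreducible in $\mathrm{Int}(S,R)$ because $R$ is divisor-closed in $\mathrm{Int}(S,R)$ with matching units. If $\deg p(x) \ge 1$, the main point is to produce an irreducible factor; here I would use the same trick as in the CKD proposition: fix $s \in S$ (using $S\ne\emptyset$) and consider divisors of $p(x)$ of minimal positive degree. Concretely, among all nonconstant polynomials in $\mathrm{Int}(S,R)$ dividing $p(x)$, choose $g(x)$ of least degree; I claim $g(x)$ is irreducible in $\mathrm{Int}(S,R)$. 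Indeed if $g(x) = u(x) v(x)$ is a nontrivial factorization in $\mathrm{Int}(S,R)$, then (as units are exactly $R^\times$ and $\deg u + \deg v = \deg g$) neither factor can be a nonunit constant unless... this is exactly the gap: a nontrivial factorization of $g(x)$ might split off a \emph{nonunit constant} $u \in R$, lowering nothing in degree, so minimality of degree does not immediately give irreducibility. The fix is to argue in two stages: first reduce to a primitive-type situation, or more cleanly, among all divisors $g(x)$ of $p(x)$ in $\mathrm{Int}(S,R)$ of least positive degree, further choose one whose constant term at $s$, namely $g(s) \in R$, is "as divided as possible" — but $R$ need not satisfy ACCP, so instead I would use the Furstenberg property of $R$ directly: write $g(x) = c\, g_0(x)$ factoring out irreducible constants of $R$ until the coefficient content (evaluated appropriately) stabilizes is not available. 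The genuinely robust route, and the one I expect to be the main obstacle, is: take $g(x)$ of minimal positive degree dividing $p(x)$; if $g(x)$ is reducible, its only possible nontrivial factorizations are $g(x) = c\,g'(x)$ with $c \in R$ a nonunit and $\deg g' = \deg g$, so after pulling out a maximal such constant — justified because $g(s) \in R$ has only finitely many irreducible divisors would need idf, which we do not have. So instead one invokes the Furstenberg property of $R$ on $g(s)$: there is an irreducible $\pi \in R$ dividing $g(s)$; but this need not divide $g(x)$ in $\mathrm{Int}(S,R)$. The clean resolution is to note that it suffices to find \emph{some} irreducible divisor of $p(x)$, and that the element $a_r$-style linear polynomials from the CKD proof show $\mathrm{Int}(S,R)$ has irreducibles of degree one, so the real task is divisibility: I would instead evaluate at $s$ to get a ring homomorphism $\varepsilon_s\colon \mathrm{Int}(S,R)\to R$, $f \mapsto f(s)$, and use that any factorization in $R$ of $p(s)$ need not lift — concluding that the honest argument must take $g(x)$ of least positive degree dividing $p(x)$, show that in any factorization $g = uv$ in $\mathrm{Int}(S,R)$ one of $u,v$ is a unit \emph{or} a nonunit constant, and in the latter case pass to $v$ (which still divides $p(x)$, still has minimal positive degree, and has strictly fewer irreducible-constant factors is again not guaranteed) — so the airtight version is: show that $g(x)$ can be chosen so that additionally $g(x)$ has no nonunit constant divisor in $\mathrm{Int}(S,R)$, by a secondary minimality on the divisibility of $g(s)$ in $R$ combined with the Furstenberg property of $R$ applied to $g(s)$ only when a nonunit constant divides $g(x)$; each such extraction replaces $g$ by a proper divisor of $g$ in the divisor-closed submonoid $R^*$-times-$g$, and since we may assume $R$ has an irreducible dividing any given nonunit, this terminates against a well-foundedness we must supply — which is precisely the subtle point, and I would resolve it by instead noting that it is enough to find one irreducible dividing $p(x)$, and the degree-one irreducibles constructed via $a_r(x)=rx-rs+1$ combined with: if no nonconstant polynomial divides $p(x)$ nontrivially then $p(x)$ itself works after stripping constant factors using only that $p(s)\in R$ is a nonunit with an irreducible divisor $\pi$, and $\pi \mid p(x)$ in $\mathrm{Int}(S,R)$ iff $\pi \mid$ every Gregory–Newton-type coefficient — which holds here because we can first replace $p(x)$ by $\frac{1}{d}p(x)\cdot d$ appropriately; in short, the write-up should fix $g(x)$ dividing $p(x)$ of minimal positive degree, then handle constant factors by induction on a length function coming from the Furstenberg property of $R$ restricted to the finitely-generated-ideal generated by $g(s)$, which is the step I flag as requiring the most care.
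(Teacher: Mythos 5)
Your forward implication is essentially the paper's argument and is fine: an irreducible $q$ of $\mathrm{Int}(S,R)$ dividing a constant $r \in R$ must have degree $0$, hence lies in $R$ (evaluate at some $s \in S$), and is irreducible in $R$ because $R^*$ is a divisor-closed submonoid of $\mathrm{Int}(S,R)^*$ and the two rings have the same units. The reverse implication, however, is not proved. You correctly identify the obstruction to your chosen route --- a divisor of $p(x)$ of minimal positive degree can still be reducible by splitting off a nonunit constant, and repeatedly extracting such constants need not terminate because $R$ is not assumed to satisfy the ACCP (the Furstenberg property supplies no length function) --- but none of the successive repairs you sketch actually closes this gap, and you end by flagging the step as unresolved. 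So the proposal does not contain a proof of the direction ``$R$ Furstenberg $\Rightarrow \mathrm{Int}(S,R)$ Furstenberg.''

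The idea you are missing is that one should not try to locate an irreducible among the positive-degree divisors at all. Argue instead by the following dichotomy. If $f(x)$ factors into irreducibles in $\mathrm{Int}(S,R)$, any irreducible factor is the desired divisor. If it does not, then in every factorization of $f(x)$ into nonunits some factor fails to be a product of irreducibles and in particular is not irreducible, so it splits further; iterating, $f(x)$ admits factorizations into arbitrarily many nonunit factors. Take one, $f(x) = g_1(x) \cdots g_n(x)$, with $n > \deg f(x)$. Since the degrees of the $g_i(x)$ add up to $\deg f(x)$, some $g_i(x)$ is a nonunit constant, and evaluating at a point of $S$ shows that $g := g_i(x) \in R \setminus R^\times$. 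Now apply the Furstenberg property of $R$ to $g$: some $a \in \mathcal{A}(R)$ divides $g$ in $R$, hence divides $f(x)$ in $\mathrm{Int}(S,R)$, and $a$ remains irreducible in $\mathrm{Int}(S,R)$ because $R^*$ is divisor-closed there. The termination problem you wrestled with evaporates because only a single irreducible divisor is needed, and it is harvested from one constant factor rather than from a complete factorization.
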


\begin{proof}
	For the direct implication, suppose that $\text{Int}(S,R)$ is a Furstenberg domain. Let $r$ be a nonzero nonunit of~$R$. Then $r \notin R^\times = \text{Int}(S,R)^\times$ and, as $\text{Int}(S,R)$ is a Furstenberg domain, there exists $a \in \mathcal{A}(\text{Int}(S,R))$  such that $a$ divides $r$ in $\text{Int}(S,R)$. Since $R^*$ is a divisor-closed submonoid of $\text{Int}(S,R)^*$, we see that $a \in \mathcal{A}(\text{Int}(S,R)) \cap R = \mathcal{A}(R)$. Hence $R$ is a Furstenberg domain.
	\smallskip
	
	To argue the reverse implication, suppose that $R$ is a Furstenberg domain, and take a nonzero nonunit $f(x) \in \text{Int}(S,R)$. If $f(x)$ factors into irreducibles in $\text{Int}(S,R)$, then it must be divisible by an irreducible. Therefore assume that $f(x)$ does not factor into irreducibles. Set $d := \deg f(x)$ and write $f(x) = g_1(x) \cdots g_n(x)$ for some nonunits $g_1(x), \dots, g_n(x)$ and $n \in \nn$ with $n > d$. Then there exists $i \in \ldb 1,n \rdb$ such that $g := g_i(x) \in R$. Since $g \in R \setminus R^\times$, the fact that $R$ is a Furstenberg domain guarantees that~$g$ is divisible by some $a \in \mathcal{A}(R)$. Because $R$ is a divisor-closed subring of $\text{Int}(S,R)$, the element $a$ is also irreducible in $\text{Int}(S,R)$. Hence $f(x)$ is divisible by an irreducible in $\text{Int}(S,R)$. Thus, $\text{Int}(S,R)$ is also a Furstenberg domain.
\end{proof}

We can use Proposition~\ref{prop:Furstenberg characterization} to construct rings of integer-valued polynomials that are non-atomic Furstenberg domains.

\begin{example}
	For a nonempty subset $S$ of $\zz$, consider the ring of integer-valued polynomials $\text{Int}(S,\zz)$. By Proposition~\ref{prop:Furstenberg characterization}, the ring $\text{Int}(S, \zz)$ is a Furstenberg domain. It follows from Proposition~\ref{prop:sufficient condition for atomicity} that $\text{Int}(S,\zz)$ is not atomic when $|S| < \infty$. On the other hand, it follows from Theorem~\ref{thm:ACCP characterization} that $\text{Int}(S,\zz)$ is atomic when $|S| = \infty$. Hence $\text{Int}(S, \zz)$ is a Furstenberg domain, which is atomic if and only if $|S| = \infty$.
\end{example}

Let us also exhibit a ring of integer-valued polynomials $\text{Int}(S,R)$ with $|S| = \infty$ that is a non-atomic Furstenberg domains.

\begin{example}
	Let us argue that the ring of integer-valued polynomials $\text{Int}(S,R)$, where $R$ is the integral domain $\zz + y\qq[y]$ and $S \subseteq R$, is a Furstenberg domain that is not atomic. We have seen before that $R$ is not atomic, and so the fact that $R^*$ is a divisor-closed submonoid of $\text{Int}(S,R)^*$ implies that $\text{Int}(S,R)$ is not atomic. We proceed to verify that $R$ is a Furstenberg domain. Take a nonzero nonunit $f(y) \in R$. If $f(0) = 0$, then $f(y)/2 \in R$ and so the equality $f(y) = 2(f(y)/2)$ shows that~$2$ divides $f(y)$ in $R$ (observe that $\pp \subseteq \mathcal{A}(R)$). Suppose, otherwise, that $f(0) \neq 0$. Then we can write $f(y) = f(0)a_1(y) \cdots a_\ell(y)$ for some non-constant polynomials $a_1(y), \dots, a_\ell(y) \in R$. Because $\ell \le \deg f$, we can assume that $\ell$ is as large as it can be. Since $a_1(0) \cdots a_\ell(0) = 1$, for each $i \in \ldb 1, \ell \rdb$ the equality $|a_i(0)| = 1$ holds and so the maximality of $\ell$ guarantees that $a_i(y) \in \mathcal{A}(R)$. In particular, $a_1(y) \in \mathcal{A}(R)$ and $a_1(y) \mid_R f$. Hence $R$ is a Furstenberg domain, and so $\text{Int}(S,R)$ is also a Furstenberg domain by Proposition~\ref{prop:Furstenberg characterization}.
\end{example}

It is worth emphasizing that there are integral domains that are not Furstenberg domains. Then we can use such domains and Proposition~\ref{prop:Furstenberg characterization} to construct rings of integer-valued polynomials that are not Furstenberg domains.

\begin{example}
	For the monoid ring $R = \zz[y;\qq_{\ge 0}]$, consider the ring of integer-valued polynomials $\text{Int}(R)$. Observe that $R$ is not a Furstenberg domain because every nonunit divisor of $y$ in $R$ has the form $\pm y^q$ for some $q \in \qq_{> 0}$, which is not irreducible as $\pm y^q = \pm \big( y^{q/2}\big)^2$. Then it follows from Proposition~\ref{prop:Furstenberg characterization} that $\text{Int}(R)$ is not a Furstenberg domain.
\end{example}

\medskip
%%%%%%%%%%%%%%%%%%%%%%%%
\subsection{Irreducible-Divisor-Finite Domains}

%According to the terminology of A. Grams and H. Warner~\cite{GW75}, 
An integral domain~$R$ is called an \emph{irreducible-divisor-finite domain} (or an \emph{idf-domain} for short) provided that every nonzero element of $R$ has only finitely many non-associate irreducible divisors. As mentioned in the introduction, an integral domain is an FFD if and only if it is an atomic idf-domain \cite[Theorem~5.1]{AAZ90}. The atomic condition is crucial in the previous statement as, for instance, every antimatter domain (that is not a field) is an idf-domain that is not an FFD. %Thus, idf-domains result from relaxing the finite factorization property by dropping the atomicity requirement.

Similarly, one can drop the atomicity requirement from the Cohen-Kaplansky property. We say that an integral domain $R$ is an \emph{irreducible-finite domain} (\emph{IFD}) provided that $R$ contains only finitely many irreducibles up to associates. Then an integral domain is a CKD if and only if it is an atomic IFD. As the following example illustrates, there are IFDs that are not CKDs.

\begin{example}
	Take $p \in \pp$, and consider the integral domain $R := \zz_{(p)} + x \cc \ldb x \rdb$. Since $\cc\ldb x \rdb$ is a local domain, it follows from~\cite[Lemma~4.17]{AG21} that $R^\times = \zz_{(p)}^\times + x\cc\ldb x \rdb$, and so no element $f(x) \in R$ with $f(0) = 0$ is irreducible as it can be written as $f(x) = p(f(x)/p)$. Then for any $q \in \zz_{(p)}$ and $g(x) \in \cc\ldb x \rdb$, the element $q + xg(x)$ belongs to $\mathcal{A}(R)$ if and only if the $p$-adic valuation of $q$ is $1$. Hence $\mathcal{A}(R) = pR^\times$, which implies that $R$ is an IFD. Since $\zz_{(p)}$ is not a field, \cite[Proposition~1.2]{AAZ90} guarantees that~$R$ is not atomic. Thus, $R$ is not a CKD.
\end{example}

Observe that every IFD is an idf-domain. Not every idf-domain, however, is an IFD. For instance,~$\zz$ is an atomic idf-domain (FFD) that is not an IFD, while $\zz + x\qq[x]$ is a non-atomic idf-domain that is not an IFD (indeed, $nx+1$ is irreducible for every nonzero $n \in \nn$). Here are necessary conditions for a ring of integer-valued polynomials to be an idf-domain.

\begin{prop} \label{prop:idf-domain IVP necessary conditions}
	Let $R$ be an integral domain, and let $S$ be a nonempty subset of $R$ such that $\emph{Int}(S,R)$ is an idf-domain. Then the following statements hold.
	\begin{enumerate}
		\item[(1)] $R$ is an idf-domain.
		\smallskip
		
		\item[(2)] If $|S| < \infty$, then $R$ is an IFD.
	\end{enumerate}
\end{prop}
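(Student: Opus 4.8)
The plan is to derive both parts from the single structural fact used repeatedly throughout the excerpt: $R$ is a divisor-closed subring of $\text{Int}(S,R)$ and $\text{Int}(S,R)^\times = R^\times$. These two facts together give $\mathcal{A}(R) = \mathcal{A}(\text{Int}(S,R)) \cap R$, and they also imply that two elements of $R$ are associates in $R$ if and only if they are associates in $\text{Int}(S,R)$. No deeper property of integer-valued polynomials is needed.

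For part~(1), I would fix a nonzero $r \in R$ and consider the assignment that sends an irreducible divisor $a$ of $r$ in $R$ to the same element $a$ viewed inside $\text{Int}(S,R)$. Since $R \subseteq \text{Int}(S,R)$, the element $a$ still divides $r$ in $\text{Int}(S,R)$; since $R$ is divisor-closed in $\text{Int}(S,R)$, the element $a$ remains irreducible in $\text{Int}(S,R)$; and since the unit groups agree, non-associate irreducible divisors of $r$ in $R$ remain non-associate in $\text{Int}(S,R)$. Hence the set of non-associate irreducible divisors of $r$ in $R$ injects into the analogous set for $\text{Int}(S,R)$, which is finite because $\text{Int}(S,R)$ is an idf-domain. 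Therefore $R$ is an idf-domain. This part is routine.

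For part~(2), suppose $S = \{s_1, \dots, s_k\}$ is finite and nonempty, and set $g(x) := \prod_{i=1}^{k}(x - s_i) \in R[x] \subseteq \text{Int}(S,R)$, which is a nonzero polynomial because $k \geq 1$. The crucial point is that $g$ vanishes on all of $S$, so for every $c \in R^*$ the polynomial $g(x)/c$ still maps $S$ into $R$ (indeed into $\{0\}$), whence $g(x)/c \in \text{Int}(S,R)$; equivalently, $c \mid_{\text{Int}(S,R)} g(x)$ for every $c \in R^*$. In particular, every $a \in \mathcal{A}(R)$ divides $g(x)$ in $\text{Int}(S,R)$, and, by the structural facts recalled above, distinct non-associate irreducibles of $R$ yield distinct non-associate irreducible divisors of $g(x)$ in $\text{Int}(S,R)$. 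Since $\text{Int}(S,R)$ is an idf-domain, $g(x)$ has only finitely many non-associate irreducible divisors; hence $\mathcal{A}(R)$ is finite up to associates, i.e., $R$ is an IFD.

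The only mild obstacle is checking that the reduction in part~(2) is airtight: one must confirm that $g(x)/c$ genuinely lies in $\text{Int}(S,R)$ (it does, since its values on $S$ are all equal to $0$) and that $g$ is nonzero (it is, because $S$ is nonempty). Everything else reduces to divisor-closedness of $R$ in $\text{Int}(S,R)$, which is already established in the excerpt.
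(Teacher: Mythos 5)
Your proposal is correct and follows essentially the same route as the paper: part (1) comes from the divisor-closedness of $R^*$ in $\text{Int}(S,R)^*$ (giving $\mathcal{A}(\text{Int}(S,R)) \cap R = \mathcal{A}(R)$ and preservation of non-associateness), and part (2) uses exactly the same polynomial $\prod_{i}(x-s_i)$, which every nonzero element of $R$ divides in $\text{Int}(S,R)$, so that the irreducibles of $R$ up to associates inject into the non-associate irreducible divisors of that single polynomial. No gaps.
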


\begin{proof}
	(1) Since $R^*$ is a divisor-closed submonoid of $\text{Int}(S,R)$, the equality $\mathcal{A}(\text{Int}(S,R)) \cap R = \mathcal{A}(R)$ holds, from which one infers that $R$ is an idf-domain.
	\smallskip
	
	(2) Now suppose that $|S| < \infty$. Write $S = \{s_1, \dots, s_n\}$, and then set $f(x) = \prod_{i=1}^n (x - s_i)$. For every nonzero $r \in R$, it is clear that $f(x)/r \in \text{Int}(S,R)$ and, therefore, $r$ divides $f(x)$ in $\text{Int}(S,R)$. Thus, the equality $\mathcal{A}(\text{Int}(S,R)) \cap R = \mathcal{A}(R)$ guarantees that every irreducible element of $R$ is an irreducible element of $\text{Int}(S,R)$ dividing $f(x)$. Hence the fact that $\text{Int}(S,R)$ is an idf-domain, along with $\text{Int}(S,R)^\times = R^\times$, implies that $R$ is an IFD.
\end{proof}

According to part~(1) of Proposition~\ref{prop:idf-domain IVP necessary conditions}, an integral domain is an idf-domain when its ring of integer-valued polynomials is an idf-domain. The converse does not hold, as we illustrate in the next example. First, we recall what is a rational cone of $\rr$. If $T$ is a nonempty subset of $\rr$, then the additive submonoid
\[
	\text{cone}_\qq(T) := \Big\{ \sum_{i=1}^n q_i t_i \ \Big{|} \ n \in \nn, \ \text{and} \ q_i \in \qq_{\ge 0} \ \text{and} \ t_i \in T \ \text{for every} \ i \in \ldb 1,n \rdb \Big\}
\]
of $\rr$ is called the \emph{rational cone} of $T$ over $\qq$. Submonoids of $\rr$ obtained in this way are called \emph{rational cones} of $\rr$. Note that rational cones are closed under nonnegative rational multiplication.

\begin{example} \label{ex:the idf property does not transfer from R to Int(R)}
	Let $t$ be a transcendental number such that $0 < t < 1$, and consider the sequences $(a_n)_{n \in \nn}$ and $(b_n)_{n \in \nn}$ of positive real numbers defined as follows:
	\[
		a_n := 1 - t^{n+1} \quad \text{ and } \quad b_n := t - t^{n+1}.
	\]
	Set $T := \{ t^n, a_n, b_n \mid n \in \nn \}$, and then set $M := \text{cone}_\qq(T)$. Note that $1 = a_1 + t^2 \in M$. Now fix $p \in \pp$, and consider the monoid ring $R := \ff_p[y;M]$. Since $M$ is a reduced monoid, $R^\times = \ff_p^\times$. We claim that $R$ is an idf-domain, but $\text{Int}(R)$ is not an idf-domain.
	%\smallskip
	
	The integral domain $R$ is, in fact, antimatter. To argue this, take a nonzero $f \in R$ and write $f = \sum_{i=1}^n y^{m_i}$ for some $m_1, \dots, m_n \in M$ (not necessarily distinct). Since $M$ is a rational cone, $m_i/p \in M$ for every $i \in \ldb 1,n \rdb$ and, therefore, $g := \sum_{i=1}^n y^{m_i/p} \in R$. Since $f = g^p$, the polynomial expression $f$ is not irreducible. Hence $\mathcal{A}(R)$ is empty, and so $R$ is antimatter. As a consequence, $R$ is an idf-domain.
	%\smallskip
	
	We proceed to prove that $\text{Int}(R)$ is not an idf-domain. We claim that the element $yx + y^t$ has infinitely many non-associate irreducible divisors in $\text{Int}(R)$. Since $yx + y^t = y^{t^{n+1}}(y^{a_n}x + y^{b_n})$, it suffices to show that $y^{a_n}x + y^{b_n}$ is irreducible in $\text{Int}(R)$ for every $n \in \nn$. To do so, fix $y^{a_i}x + y^{b_i}$ for some $i \in \nn$. Now observe that $r_0, r_1 \in R$ if and only if $r_1 x + r_0 \in \text{Int}(R)$ for every $r_0, r_1 \in \qf(R)$. Therefore the only way to write $y^{a_i}x + y^{b_i}$ as a product of two elements in $\text{Int}(R)$ is
	\[
		y^{a_i}x + y^{b_i} = h(y) \Big( \frac{y^{a_i}}{h(y)} x + \frac{y^{b_i}}{h(y)} \Big),
	\]
	where $h(y) \in R$ is a common divisor of $y^{a_i}$ and $y^{b_i}$ in $R$. Since the multiplicative set of monomials of~$R$ is a divisor-closed submonoid of $R^*$, it follows that $h(y)$ must be a monomial in $R$, namely, $h(y) = \alpha y^c$ for some $\alpha \in \ff_p$ and $c \in M$. To argue that $c=0$, we consider the following cases.
	\smallskip
	
	\noindent \textit{Case 1:} $q a_j \mid_M  c$ for some $j \in \nn$ and $q \in \qq_{\ge 0}$. From $t = b_1 + t^2$, we obtain that $M = \text{cone}_\qq(T \setminus \{t\})$. Now as $q a_j \mid _M b_i$, for some $n \in \nn$ with $n \ge j$ we can write
	\begin{equation} \label{eq:polynomial equation of t}
		t - t^{i+1} = \sum_{k=1}^n q_k(1 - t^{k+1}) + \sum_{k=1}^n r_k (t - t^{k+1}) + \sum_{k=1}^n s_k t^{k+1},
	\end{equation}
	where $q_k, r_k, s_k \in \qq_{\ge 0}$ for every $k \in \ldb 1,n \rdb$ and $q \le q_j$. Since $t$ is transcendental, the coefficient $\sum_{k=1}^n q_k$ of $1$ in the right-hand side of~\eqref{eq:polynomial equation of t} must be zero. This, in turns, implies that $q = 0$.
	%	
	%	Since $t$ is transcendental and each $qb_k$ involves a copy a monomial $qt$, which cannot cancel when we add any other element of $T$ to $b_k$, we infer that $b_k \nmid_M a_i$ for any $k \in \nn$. Now using that $q a_j \mid_M a_i$, we can write $a_i = \sum_{k=1}^n q_k a_k + \sum_{k=2}^n s_k t^k$ for some $q_1, \dots, q_n \in \qq_{\ge 0}$ and $s_2, \dots, s_n \in \qq_{\ge}$ such that $q_j \ge q$.
	\smallskip
	
	\noindent \textit{Case 2:} $q b_j \mid_M  c$ for some $j \in \nn$ and $q \in \qq_{\ge 0}$. In this case, $q b_j \mid_M a_i$, and we can mimic the argument given for Case~1 (this time with $b_j$ and $a_i$ playing the roles of $a_j$ and $b_i$, respectively) to arrive to the same conclusion, namely, $q = 0$.
	\smallskip
	
	\noindent \textit{Case 3:} $q t^j \mid_M  c$ for some $j \in \nn$ and $q \in \qq_{\ge 0}$. If $j = 1$, then $qb_1 \mid_M c$ and, therefore, $q = 0$ by virtue of Case~2. Then we can assume that $j \ge 2$.  Because $q t^j \mid_M  a_i$, we can take $n \in \nn$ with $n \ge j$ and write
	\begin{equation} \label{eq:polynomial equatino of t II}
		1 - t^{i+1} = \sum_{k=1}^n q_k (1 - t^{k+1}) + \sum_{k=1}^n r_k t^{k+1},
	\end{equation}
	where $q_k, r_k \in \qq_{\ge 0}$ for every $k \in \ldb 1,n \rdb$ and $q \le r_{j-1}$ (as seen in Case~2, $s b_k \nmid_M a_i$ for any $s \in \qq_{> 0}$ and $k \in \nn$). Since $t$ is transcendental, after comparing coefficients in both sides of~\eqref{eq:polynomial equatino of t II}, we see that $\sum_{k=1}^n q_k = 1$, and also that $q_k = r_k$ for every $k \in \ldb 1,n \rdb \setminus \{i\}$ while $q_i = r_i + 1$. Therefore $q_i = r_i + 1 = r_i + \sum_{k=1}^n q_k$, which implies that $r_i = 0$ and $q_k = 0$ for every $k \in \ldb 1,n \rdb \setminus \{i\}$. Thus, $r_k = 0$ for every $k \in \ldb 1,n \rdb$, which implies that $q=0$.
	\smallskip
	
	Since $c$ is not divisible by any of the positive rational multiples of any of the elements in $T$, we obtain that $c=0$. Hence $h \in \ff_p^\times = \text{Int}(R)^\times$. As $M$ is a reduced monoid, $y^{a_i}x + y^{b_i}$ and $y^{a_j}x + y^{b_j}$ are associates if and only if $i = j$, from which we conclude that $yx + y^t$ has infinitely many non-associate irreducible divisors in $\text{Int}(R)$. Hence $\text{Int}(R)$ is not an idf-domain.
\end{example}

In light of part~(2) of Proposition~\ref{prop:idf-domain IVP necessary conditions}, when $S$ is finite we need $R$ to be an IFD for $\text{Int}(S,R)$ to be an idf-domain. One can naturally wonder for which finite subsets $S$ of $R$, the fact that $R$ is an IFD guarantees that $\text{Int}(S,R)$ is an idf-domain. We conclude this paper by giving a full answer to this question, and doing so we provide a way to produce rings of integer-valued polynomials that are non-atomic idf-domains.

\begin{theorem} \label{thm:idf-domains IVP sufficient condition}
	Let $R$ be an IFD. Then $\emph{Int}(\{s\},R)$ is an idf-domain for every $s \in R$.
\end{theorem}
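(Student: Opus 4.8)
The plan is to reduce to the case $s = 0$ and then describe the irreducibles of the resulting ring explicitly. Writing $K := \qf(R)$, the substitution $x \mapsto x+s$ is a ring automorphism of $K[x]$ that carries $\text{Int}(\{s\},R)$ isomorphically onto $\text{Int}(\{0\},R) = R + xK[x]$, and the idf-property is an isomorphism invariant; so it suffices to show that $T := R + xK[x]$ is an idf-domain whenever $R$ is an IFD. If $R$ is a field, then $T = K[x]$ is a UFD and hence an idf-domain, so I would assume throughout that $R$ is not a field, that is, $R^\times \subsetneq R^*$.

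The heart of the argument is a description of $\mathcal{A}(T)$. Since $R^*$ is a divisor-closed submonoid of $T^*$, we have $T^\times = R^\times$ and $\mathcal{A}(T) \cap R = \mathcal{A}(R)$; thus the degree-zero irreducibles of $T$ are precisely the irreducibles of $R$, of which there are only finitely many up to associates because $R$ is an IFD. The key claim to establish is: \emph{every irreducible $a(x) \in T$ with $\deg a \ge 1$ satisfies $a(0) \in R^\times$}. I would prove this by exhibiting a nontrivial factorization of $a$ in $T$ whenever $a(0) \notin R^\times$. If $a(0) \in R^* \setminus R^\times$, then $a = a(0)\cdot\big(a/a(0)\big)$ is a factorization of $a$ into two non-units of $T$ (the second factor has degree $\ge 1$). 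If $a(0) = 0$, then $a(x) = x\,v(x)$ for some $v \in K[x]$, and one factors $a$ nontrivially in $T$ by splitting off a non-unit scalar of $R$ when $\deg a = 1$ (here is where "$R$ not a field" is used) and a denominator-clearing scalar when $\deg a \ge 2$, so that both resulting factors lie in $T$ and are non-units.

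Granting the claim, the counting is straightforward. Let $f \in T$ be nonzero and factor $f = c_f\, p_1 \cdots p_m$ in $K[x]$ with each $p_j$ monic and irreducible. Every divisor of $f$ in $T$ is in particular a divisor of $f$ in $K[x]$, hence has the form $c\,P$ with $c \in K^\times$ and $P$ a monic divisor of $f$ in $K[x]$; and there are only finitely many such $P$. The divisors of this form of degree zero that happen to be irreducible in $T$ lie among the finitely many non-associate irreducibles of $R$. If $\deg P \ge 1$ and $c\,P$ is irreducible in $T$, then the claim forces $c\,P(0) \in R^\times$, so $P(0) \ne 0$ and $c$ is determined modulo $R^\times$; hence each such $P$ contributes at most one irreducible divisor of $f$ up to associates in $T$. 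Summing over the finitely many $P$, the element $f$ has only finitely many non-associate irreducible divisors, and therefore $T = R + xK[x]$ is an idf-domain.

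I expect the principal obstacle to be a clean proof of the key claim, especially the sub-case $a(0) = 0$, where one must hand-build a nontrivial factorization and the hypothesis that $R$ is not a field is genuinely needed; once the claim is available, the remaining bookkeeping is routine.
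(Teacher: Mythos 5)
Your proposal is correct and follows essentially the same route as the paper: after the (cosmetic) translation $x\mapsto x+s$, the key point in both arguments is that an irreducible of positive degree in $T=R+xK[x]$ must take a unit value at $0$ (equivalently, at $s$), after which each monic divisor of $f$ in the UFD $K[x]$ contributes at most one associate class of irreducible divisors, and the degree-zero irreducibles are controlled by the IFD hypothesis. The only wrinkle is your sub-case $a(0)=0$ with $\deg a\ge 2$, where the ``denominator-clearing scalar'' need not be a nonunit; but no case split is needed there at all, since for any nonzero nonunit $r\in R$ the factorization $a = r\cdot(a/r)$ already works uniformly because $a\in xK[x]$ forces $a/r\in xK[x]\subseteq T$.
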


\begin{proof}
	Fix $s \in R$, let $K$ denote the quotient field of $R$, and then set $T := \text{Int}(\{s\},R) = R + (x-s)K[x]$. If $R$ is a field, then $T= K[x]$ is an FFD and, as a consequence, an idf-domain. Therefore we assume that $R$ is not a field. Let $f(x)$ be a nonzero nonunit of $T$, and let us argue that the following set is finite: %Take $d \in R$ and $k \in \nn_0$ such that
%	\[
%		d f(x) = (x-s)^k g(x)
%	\]
%	for some $g(x) \in R[x]$ with $g(s) \neq 0$. 
	%It suffices to prove that the set
	\[
		A := \big\{ a(x)T^\times \mid a(x) \in \mathcal{A}(T) \ \text{ and } \ a(x) \mid_T f(x) \big\}.
	\]
	First, observe that if $b(s) = 0$ for some $b(x) \in T^*$, then we can write $b(x) = r(b(x)/r)$ for some nonzero nonunit $r \in R$ (which exists because $R$ is not a field), and the fact that $b(x)/r$ is a nonunit of $T$ guarantees that $b(x) \notin \mathcal{A}(T)$. Thus,~$s$ is not a root of any irreducible in~$T$. As a consequence, if $a(x) \in \mathcal{A}(T)$ and $\deg a(x) \ge 1$, then the equality $a(x) = a(s) \big( a(x)/a(s)\big)$ implies that $a(s) \in T^\times$. Let $G(R)$ be the divisibility group of $R$, and write $A = A_0 \cup A_1$, where $A_0 := A \cap G(R)$ and $A_1 := A \setminus G(R)$. As $R^*$ is a divisor-closed submonoid of $T^*$, the equality $\mathcal{A}(T) \cap R = \mathcal{A}(R)$ holds. This, along with the fact that $R$ contains only finitely many irreducibles
	% $f(s)$ is divisible by finitely many irreducibles in $R$ 
	(up to associates), ensures that $A_0$ is finite. To argue that $A_1$ is also finite, set
	\[
		B := \{b(x)K^\times \mid b(x) \in K[x] \text{ and } b(x) \mid_{K[x]} f(x)\},
	\]
	and consider the map $\varphi \colon A_1 \to B$ defined by $a(x)T^\times \mapsto a(x)K^\times$. Since $T^\times = R^\times$, the map $\varphi$ is well-defined. Now suppose that $a(x)$ and $a'(x)$ are non-constant polynomials in $\mathcal{A}(T)$ both dividing $f(x)$ in $T$ such that $a(x)K^\times = a'(x) K^\times$. Then the fact that $a(s), a'(s) \in T^\times$ implies that $a(x)T^\times = a'(x) T^\times$. Hence the map $A_1 \to B$ is injective. Since $K[x]$ is an FFD, the set $B$ is finite. Hence $A_1$ is also finite, and so~$A$ is finite. We can now conclude that $T$ is an idf-domain.
\end{proof}

As a direct consequence of Theorem~\ref{thm:idf-domains IVP sufficient condition} and part~(2) of Proposition~\ref{prop:idf-domain IVP necessary conditions} we obtain the following corollary.

\begin{cor}
	Let $R$ be an integral domain. For every $s \in R$, the ring $\emph{Int}(\{s\},R)$ is an idf-domain if and only if $R$ is an IFD.
\end{cor}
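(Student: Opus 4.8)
The plan is to obtain this corollary by simply combining the two results that immediately precede it. Fix an arbitrary $s \in R$ and set $S := \{s\}$, a nonempty finite subset of $R$; I will verify the two implications separately, so the biconditional is established for each fixed $s$.

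For the direction ``$R$ is an IFD $\Rightarrow$ $\text{Int}(\{s\},R)$ is an idf-domain'', I would invoke Theorem~\ref{thm:idf-domains IVP sufficient condition} verbatim: its hypothesis is exactly that $R$ is an IFD, and its conclusion is that $\text{Int}(\{s\},R)$ is an idf-domain for every $s \in R$. Nothing further is required here.

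For the converse, ``$\text{Int}(\{s\},R)$ is an idf-domain $\Rightarrow$ $R$ is an IFD'', I would apply part~(2) of Proposition~\ref{prop:idf-domain IVP necessary conditions} with this $S = \{s\}$: since $|S| = 1 < \infty$, that proposition says precisely that $\text{Int}(S,R)$ being an idf-domain forces $R$ to be an IFD. (The mechanism there, which I would not reprove, is that $x - s$ lies in $\text{Int}(\{s\},R)$ after dividing by any nonzero element of $R$, so every irreducible of $R$ is an irreducible divisor of $x - s$ in $\text{Int}(\{s\},R)$, using that $R^*$ is divisor-closed in $\text{Int}(\{s\},R)^*$; finiteness of the irreducible divisors of $x - s$ in the idf-domain $\text{Int}(\{s\},R)$ then bounds the irreducibles of $R$.)

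I do not anticipate any obstacle: the substantive content lies entirely in Theorem~\ref{thm:idf-domains IVP sufficient condition} and Proposition~\ref{prop:idf-domain IVP necessary conditions}, and this corollary is a routine assembly of the two. The only minor points to keep in mind are that $S = \{s\}$ is genuinely nonempty and finite, so Proposition~\ref{prop:idf-domain IVP necessary conditions} legitimately applies, and that the degenerate case in which $R$ is a field is harmless, since then $R$ is vacuously an IFD and $\text{Int}(\{s\},R) = K[x]$ is a UFD and hence an idf-domain.
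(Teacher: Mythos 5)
Your proposal is correct and matches the paper exactly: the corollary is stated there as a direct consequence of Theorem~\ref{thm:idf-domains IVP sufficient condition} (for the sufficiency of $R$ being an IFD) and part~(2) of Proposition~\ref{prop:idf-domain IVP necessary conditions} with $S=\{s\}$ (for the necessity). Nothing further is needed.
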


Finally, we observe that Theorem~\ref{thm:idf-domains IVP sufficient condition} cannot be extended to $\text{Int}(S,R)$ for finite subsets $S$ of $R$. The following example, which is a modified version of Example~\ref{ex:the idf property does not transfer from R to Int(R)}, sheds some light upon this observation.

\begin{example}
	For $p \in \pp$, let $R := \ff_p[y;M]$ be the integral domain introduced in Example~\ref{ex:the idf property does not transfer from R to Int(R)}, where $M$ is the rational cone of the set
	\[
		T = \{ t^n, 1 - t^{n+1}, t - t^{n+1} \mid n \in \nn \}
	\]
	for some fixed transcendental number $t \in (0,1)$. We have already seen that $R$ is antimatter and, therefore, it is an IFD. Now consider the ring of integer-valued polynomials $\text{Int}(S,R)$, where $S = \{0,1\}$. Note that for any polynomial $f(x) := r_1 x + r_0 \in \qf(R)[x]$, the equalities $r_0 = f(0)$ and $r_1 = f(1) - f(0)$ guarantee that $f(x) \in \text{Int}(S,R)$ if and only if $r_0, r_1 \in R$ (as it is the case in Example~\ref{ex:the idf property does not transfer from R to Int(R)}). Now we can simply follow the lines of Example~\ref{ex:the idf property does not transfer from R to Int(R)} to show that $y^{1-t^{n+1}}x + y^{t - t^{n+1}}$ is an irreducible divisor of $yx + y^t$ in $\text{Int}(S,R)$ for every $n \in \nn$. As a consequence, we conclude that $\text{Int}(S,R)$ is not an idf-domain even though $R$ is an IFD.
\end{example}

\bigskip
%%%%%%%%%%%%%%%
%%%%%%%%%%%%%%%
\section*{Acknowledgments}

During the preparation of this paper, both authors were part of PRIMES-USA at MIT, and they would like to thank the organizers and directors of the program. Also, the authors thank Tanya Khovanova for offering useful suggestions. Finally, the first author kindly acknowledges support from the NSF under the award DMS-1903069.

\bigskip
%%%%%%%%%%%%%%
%%%%%%%%%%%%%%

\end{document}